\pgfplotsset{compat=1.16}
\definecolor{myred}{rgb}{0.9, 0.0, 0.0}
\definecolor{myblue}{rgb}{0.0, 0.28, 0.67}
\definecolor{mygreen}{rgb}{0.0, 0.7, 0.0}
\definecolor{myyellow}{rgb}{1.0, 0.55, 0.0}
\definecolor{mypurple}{rgb}{0.5, 0.1, 0.5}
\newcommand{\bvec}[1]{\boldsymbol{#1}}
\newcommand{\btens}[1]{\boldsymbol{#1}}
\newcommand{\GRAD}{\bvec{\nabla}}
\newcommand{\GRADs}{\GRAD_\symm}
\newcommand{\DIV}{\bvec{\nabla}{\cdot}}
\newcommand{\ud}{\,\mathrm{d}}
\newcommand{\Id}[1][d]{\bvec{I}_{#1}}
\newcommand{\norm}[2][]{\|#2\|_{#1}}
\newcommand{\seminorm}[2][]{|#2|_{#1}}
\newcommand{\symm}{{\rm s}}
\newcommand{\sks}{{\rm sk}}
\newcommand{\diff}{\btens{K}}
\newcommand{\Real}{\mathbb{R}}
\newcommand{\Natural}{\mathbb{N}}
\newcommand{\Integer}{\mathbb{Z}}
\newcommand{\Lvec}[2][{2}]{\textbf{L}^{#1}(#2)}
\newcommand{\Lmat}[2][{2}]{\mathbb{L}^{#1}(#2)}
\newcommand{\Lmats}[2][{2}]{\mathbb{L}^{#1}_\symm(#2)}
\newcommand{\Lmatsk}[2][{2}]{\mathbb{L}^{#1}_{\sks}(#2)}
\newcommand{\Hvec}[2][{1}]{\textbf{H}^{#1}(#2)}
\newcommand{\HvecD}[2][{1}]{\textbf{H}_{0,\Gamma_d}^{#1}(#2)}
\newcommand{\Hdiv}[1]{\textbf{H}({\rm div},#1)}
\newcommand{\Hdivmat}[1]{\mathbb{H}({\rm div},#1)}
\newcommand{\Hdivmats}[1]{\mathbb{H}_\symm({\rm div},#1)}
\newcommand{\Th}[1][h]{\mathcal{T}_{#1}}
\newcommand{\normal}{\bvec{n}}
\newcommand{\tF}{t_{\rm F}}
\newcommand{\lproj}[1][h]{\Pi_{#1}}
\newcommand{\vlproj}[1][h]{\bvec{\Pi}_{#1}}
\newtheorem{theorem}{Theorem}
\newtheorem{lemma}[theorem]{Lemma}
\theoremstyle{remark}
\newtheorem{remark}[theorem]{Remark}
\theoremstyle{definition}
\title{A robust fully-mixed finite element method with skew-symmetry penalization for low-frequency poroelasticity}
\author{Stefano Bonetti\thanks{MOX, Department of Mathematics, Politecnico di Milano, 20133 Milano, Italy (stefano.bonetti@polimi.it)},  
Michele Botti\thanks{MOX, Department of Mathematics, Politecnico di Milano, 20133 Milano, Italy (michele.botti@polimi.it)},  and
Patrick Vega\thanks{Departamento de Matematica y Ciencia de la Computaci\'on, Universidad de Santiago de Chile, 9170022 Santiago, Chile (patrick.vega@usach.cl)}}
\date{}
\begin{document}
\maketitle

\begin{abstract}
\noindent
In this work, we present and analyze a fully-mixed finite element scheme for the dynamic poroelasticity problem in the low-frequency regime. We write the problem as a four-field, first-order, hyperbolic system of equations where the symmetry constraint on the stress field is imposed via penalization. 
This strategy is equivalent to adding a perturbation to the saddle point system arising when the stress symmetry is weakly-imposed.
The coupling of solid and fluid phases is discretized by means of stable mixed elements in space and implicit time advancing schemes. 
The presented stability analysis is fully robust with respect to meaningful cases of degenerate model parameters.
Numerical tests validate the convergence and robustness and assess the performances of the method for the simulation of wave propagation phenomena in porous materials.
\end{abstract}

\section{Introduction}\label{sec:intro}

The numerical modeling of coupled diffusion-deformation mechanisms in poroelastic media is relevant in several applications in geosciences, including subsidence due to fluid withdrawal, earthquake triggering due to pressure-induced faults slip, injection-production cycles in geothermal fields, and carbon dioxide storage in saline aquifers.
The poroelastic model has been consistently studied in its quasi-static formulations, where the acceleration effects are negligible. Some example of robust discretization schemes for the Biot's consolidation problems in its quasi-static form can be found in \cite{Boffi.Botti.ea:16, Chen2013,  Cockburn2009, Hu2017, Oyarzua2016}. However, for some practical applications, the quasi-static model is not sufficient for giving a comprehensive understanding of the phenomena and acceleration terms in both the solid and fluid phases need to be considered. This is the case of earthquake studies and the the studies of induced seismicity phenomena arising from soil exploitation activities. This model has been introduced by Biot in \cite{biot1} and it has been show in \cite{Carcione2010} that the dynamic effects are crucial for understanding the wave propagation phenomena in poroelastic media. 

In the literature the Biot's poroelasticity model has been studied in different formulations, e.g. with the acceleration term introduced for the displacement field only \cite{Bonetti2025}, or coupled with different physics, e.g. the fully-dynamic thermo-poroelastic model \cite{Bonetti2023, Carcione2019}.
Different method and approaches have been developed for studying this problem. First, these methods include finite difference, finite element,
boundary element, finite volume, and spectral methods \cite{Chen:95, Gaspar2003, Lemoine:14, Morency2008, Santos1986}. More recent approaches
have considered high-order space-time continuous and discontinuous Galerkin methods \cite{Antonietti2022, Botti.Mazzieri:24} (also on polygonal grids), ADER scheme \cite{DeLaPuente2008}, mixed finite element methods \cite{Lee:23}, and hybridizable discontinuous Galerkin (HDG) method \cite{Meddahi:25}.

In this work, we introduce and analyze a fully-mixed formulation of the dynamic poroelasticity problem having the stress tensor, the fluid flux, the displacement, and the pore pressure as unknowns. The symmetry constraint on the stress field is imposed by adding to the four-field formulation a penalization term depending on the skew-symmetric part of the stress and a penalization parameter that has to be properly selected. The main novel contributions include a robust stability analysis at both continuous and discrete level which encompasses all the meaningful cases of degenerate model coefficients. 
The presented stability estimates does not depend on the dilatation and storage coefficients, showing the robustness with respect to volumetric locking and quasi-incompressible porous materials. Moreover, the a priori analysis also supports the case of vanishing densities parameters which make the model degenerate towards the quasi-static case. This is in contrast with respect to \cite{Lee:23} and \cite{Meddahi:25} were both the storage and densities coefficients are assumed to be strictly positive. Finally, we also mention that the results are obtained under weak data regularity and general boundary conditions. The case of mixed and non-homogeneous boundary conditions is often left out in the analysis of numerical schemes for poroelasticity problems, but is relevant for the practical applications and place several additional difficulties in the a priori analysis.  

The rest of the paper is organised as follows.
In Section \ref{sec:poro}, we present the dynamic poroelasticity problem written as a first-order hyperbolic system.
In Section \ref{sec:weakform}, we formulate the problem in variational form and present the main stability result.
Section \ref{sec:discrete.setting} describes the discrete setting and contains the statement of the discrete problem.
Finally, Section \ref{sec:numerical_results} contains a thorough numerical validation of the convergence and robustness properties of the method. A physically-consistent test on wave propagation in poroelastic media is also considered.

\section{Dynamic poroelasticity}\label{sec:poro}
In this section we present the low-frequency poroelasticity equations. We define the model data and parameters characterizing the initial-boundary value problem and then reformulate it as a first-order fully-coupled hyperbolic system.

\subsection{Model problem}\label{sec:model_pb}

We consider wave propagation through fluid-saturated porous media in the low-frequency regime \cite{biot1}. Letting $\phi\in (0,1)$ denote the reference material porosity, we assume a continuous superposition of solid and fluid phases occupying $(1-\phi)$ and $\phi$ of the total volume $|\Omega|$, respectively. We refer the reader to \cite{carcione2014, Coussy:04} for the precise derivation of the dynamic problem presented hereafter.

Let $\Omega\subset\Real^d$, $d\in\{2,3\}$, denote a bounded connected polyhedral domain with boundary $\partial\Omega$ and outward normal $\normal$. For a given time interval $(0,\tF]$, with $\tF>0$, 
the problem data consist of the forcing term  $\widetilde{\btens{\eta}}:\Omega\times(0,\tF)\to\Real^{d\times d}$,
volumetric loads $\bvec{f}, \, \bvec{h}:\Omega\times(0,\tF)\to\Real^d$, 
fluid source $g:\Omega\times(0,\tF)\to\Real$,
initial solid displacement $\bvec{d}_0:\Omega\to\Real^d$,
initial solid and filtration velocities $\bvec{u}_0, \bvec{w}_0:\Omega\to\Real^d$,
and initial pore pressure $p_0:\Omega\to\Real$.
Given the previous data, the poroelasticity initial-boundary value problem consists in finding a 
displacement field $\bvec{d}:\Omega\times\lbrack 0,\tF)\to\Real^d$,
velocity field $\bvec{w}:\Omega\times\lbrack 0,\tF)\to\Real^d$, and a 
pore pressure field $p:\Omega\times\lbrack 0,\tF)\to\Real$ such that
\begin{subequations}\label{eq:biot:strong}
  \begin{alignat}{2}
    \label{eq:biot:strong:momentum}
    \rho_u \partial_{tt} \bvec{d} + \rho_f \partial_{t}\bvec{w} -\DIV{\btens{\sigma}} &= \bvec{f} &\qquad&\text{in $\Omega\times (0,\tF)$},
    \\
    \label{eq:biot:strong:stress}
    \btens{\sigma} - \mathcal{C}\GRADs\bvec{d} +\alpha p \btens{I}_d   &= \widetilde{\btens{\eta}} &\qquad&\text{in $\Omega\times (0,\tF)$},
     \\
    \label{eq:biot:strong:darcydyn}
    \rho_f \partial_{tt} \bvec{d} + \rho_w \partial_{t}\bvec{w} + \diff^{-1}\bvec{w} +\GRAD p &= \bvec{h} &\qquad&\text{in $\Omega\times (0,\tF)$},
    \\
    \label{eq:biot:strong:masscons}
    s_0 \partial_t p + \alpha \partial_t (\DIV\bvec{d}) + \DIV\bvec{w} &= g &\qquad&\text{in $\Omega\times (0,\tF)$},
    \\
    \label{eq:biot:strong:initial_up}
    \bvec{d}(\cdot,0)=\bvec{d}_0, \quad p(\cdot,0) &=p_0 &\qquad&\text{in $\Omega$},
     \\
    \label{eq:biot:strong:initial_vw}
    \partial_t\bvec{d}(\cdot,0)=\bvec{u}_0, \quad \bvec{w}(\cdot,0) &=\bvec{w}_0 &\qquad&\text{in $\Omega$}. 
  \end{alignat}
\end{subequations}
  
  In the momentum equilibrium equation \eqref{eq:biot:strong:momentum}, $\partial_t$ and $\partial_{tt}$ denotes the first and second time derivative, while the parameter $\rho_u$ 
  is related to the solid and fluid density $\rho_s, \rho_f>0$ according to $\rho_u \coloneq \phi\rho_f + (1-\phi)\rho_s>0$.
  
  In the constitutive relation for the stress \eqref{eq:biot:strong:stress}, $\GRADs$ denotes the symmetric part of the gradient operator acting on vector-valued fields, 
  $\mathcal{C}:\Omega\to\Real^{d^4}$ is the uniformly elliptic fourth-order tensor expressing the linear stress-strain law, and $\alpha\in(\phi,1]$ is the Biot--Willis coefficient. 
  For isotropic materials, the tensor $\mathcal{C}$ can be expressed in terms of the Lam\'e parameters $\mu\in[\underline{\mu},\overline{\mu}]$, with $0<\underline{\mu}<\overline{\mu}$, 
  and $\lambda\ge 0$ as
  $$
    \mathcal{C}\btens{\tau} \coloneq 2\mu\ \btens{\tau} + \lambda\ {\rm tr}(\btens{\tau})\btens{I}_d
    \quad\text{ for all $\tau\in \Real^{d\times d}$},
  $$
  where $\btens{I}_d$ denotes the identity matrix of $\Real^{d\times d}$ and 
  ${\rm tr}(\btens{\tau}) \coloneq \sum_i^d \tau_{ii}$. Introducing the bulk modulus $\kappa\coloneq2 d^{-1}\mu + \lambda$ 
  and the deviatoric operator $\btens{\rm dev}(\btens\tau)\coloneq\tau - d^{-1}{\rm tr}(\btens{\tau})\btens{I}_d$,
  an alternative expression for $\mathcal{C}$ is given by 
  \begin{equation}\label{eq:cauchytensor}
    \mathcal{C}\btens{\tau} = 2\mu\ \btens{\rm dev}(\btens\tau) + \kappa\ {\rm tr}(\btens{\tau})\btens{I}_d
    \quad\text{ for all $\tau\in \Real^{d\times d}$}.
  \end{equation}
  
  In the dynamic Darcy equation~\eqref{eq:biot:strong:darcydyn}, the parameter $\rho_w$ is such that $\rho_w\coloneq \rho_f\nu\phi^{-1}$, where $\nu>1$ denotes the pores tortuosity, 
  while $\diff:\Omega\to\Real^{d\times d}_\symm$ is the hydraulic conductivity tensor which, for strictly positive real numbers $\underline{K}\le\overline{K}$, satisfies 
  \begin{equation}\label{eq:ass_diff}
    \text{ $\underline{K}\seminorm{\bvec{\xi}}^2\le\diff(\bvec{x})\bvec{\xi}\cdot\bvec{\xi}\le
    \overline{K}\seminorm{\bvec{\xi}}^2$ for every  $\bvec{x}\in\Omega$ and all $\bvec{\xi}\in\Real^d$.
    }
  \end{equation}
  Clearly, as a result of \eqref{eq:ass_diff}, the uniquely defined inverse tensor $\diff^{-1}$ is also uniformly elliptic in the sense that
  $$
  \text{ ${\overline{K}\,}^{-1} \seminorm{\bvec{\xi}}^2 \le\diff^{-1}(\bvec{x})\bvec{\xi}\cdot\bvec{\xi}\le
   \underline{K}^{-1} \seminorm{\bvec{\xi}}^2$ for every  $\bvec{x}\in\Omega$ and all $\bvec{\xi}\in\Real^d$.
    }
  $$
  To be valid, the equation~\eqref{eq:biot:strong:darcydyn} requires that the spectrum of the waves involve frequencies lower than a threshold depending on $\diff$, $\rho_f$, and $\phi$ (cf. \cite{Chiavassa:13}).  
  Otherwise, more complex models such as the one described in \cite{Lu.Hanyga:05} are required.
  
  Finally, in the mass conservation equation~\eqref{eq:biot:strong:masscons},  $s_0\ge 0$ denotes the 
  constrained storage coefficient measuring the amount of fluid that can be forced into the medium 
  by pressure increments due to the structure compressibility. The relevant case of incompressible grains 
  corresponds to the limit value $s_0=0$.
  
We close the problem by prescribing the boundary conditions. We introduce two partitions $\partial \Omega = \Gamma_d \cup \Gamma_\sigma = \Gamma_p\cup \Gamma_w$ such that $\Gamma_d \cap \Gamma_\sigma=\Gamma_p\cap \Gamma_w=\emptyset$, assuming that the measures of $\Gamma_d$ and $\Gamma_p$ are strictly positive. We impose a given traction on $\Gamma_\sigma$, a fixed displacement on $\Gamma_d$, a given flux on $\Gamma_w$, and a pressure on $\Gamma_p$, namely
\begin{subequations}\label{eq:boundaryconds}
    \begin{alignat}{2}
 \label{eq:neumann}
&\btens{\sigma}\ \bvec{n}= \bvec{t}  \quad \text{on }\Gamma_\sigma\times (0,\tF] \qquad &\bvec{w}\cdot\bvec{n} = w_n \quad\text{on }\Gamma_w\times (0,\tF] 
\\
\label{eq:dirichlet}
&\bvec{d} = \bvec{d}_d \quad\text{on }\Gamma_d\times (0,\tF] \qquad &p= p_d \quad\text{on }\Gamma_p\times (0,\tF].
\end{alignat}
\end{subequations}

\subsection{First-order hyperbolic system}

Different formulations of problem \eqref{eq:biot:strong} have been studied in the literature, including the frequency-domain displacement-pressure formulation of \cite{Chen:95};
the two-displacement formulation \cite{matuszy:14,Botti.Mazzieri:24}, that is obtained by inserting the expression of the total stress $\btens{\sigma}$ and the pore pressure $p$ from \eqref{eq:biot:strong:stress} and \eqref{eq:biot:strong:masscons} in the other equations; and the three-field formulations of \cite{Anselman:23} considering the solid displacement, solid velocity, and pressure as unknowns.
In what follows, we focus on the first-order fully-mixed formulation considered, e.g., in \cite{Lee:23, Lemoine:14}. 
This choice leads to several advantages: \textit{(i)} it produces a better approximation of the Darcy velocity and stress fields; \textit{(ii)} mass and momentum conservation are locally satisfied; \textit{(iii)} additional assumptions on the poroelastic coeffcient are not required; and \textit{(iv)} it is easier to analyze the problem in the framework of strongly continuous semigroups (cf. \cite{Meddahi:25}).

We introduce the solid velocity $\bvec{u}=\partial_t \bvec{d}$ and assume that the initial displacement field $\bvec{d}_0$ is sufficiently regular to associate the initial stress tensor, defined according to \eqref{eq:biot:strong:stress} as $\btens{\sigma}_0 = \mathcal{C}\GRADs\bvec{d}_0 -\alpha p_0 \btens{I}_d$.
The uniformly elliptic fourth-order compliance tensor $\mathcal{A}$ is defined such that $\mathcal{A}(\mathcal{C}\btens{\tau})=\mathcal{C}(\mathcal{A}\btens{\tau})=\btens{\tau}$ for all $\btens{\tau}\in\Real^{d\times d}$. The explicit expression of the action of $\mathcal{A}$ derived from \eqref{eq:cauchytensor} is given by
\begin{equation}\label{eq:compliance}
\mathcal{A}\btens\tau 
  \coloneq \frac{\btens{\rm dev}(\btens\tau)}{2\mu} + \frac{{\rm tr}(\btens{\tau})}{d^2 \kappa} \btens{I}_d.
\end{equation}
We also observe that, taking the trace of the tensor equation \eqref{eq:biot:strong:stress}, allows to express the divergence of the solid displacement as 
$$
\kappa \DIV\bvec{d} = \alpha p+  d^{-1}{\rm tr}(\btens{\sigma}) .
$$
Thus, taking the time derivative in the previous identity and in \eqref{eq:biot:strong:stress}, plugging the resulting expression into \eqref{eq:biot:strong:masscons}, and owing to the definition of the compliance tensor \eqref{eq:compliance}, we rewrite the dynamic poroelasticity problem as a first-order system:
find the velocities $\bvec{u}, \bvec{w}$, the stress tensor $\btens{\sigma}$, and pressure $p$ such that
\begin{subequations}\label{eq:poro:strong}
  \begin{alignat}{2}
    \label{eq:poro:strong:momentum}
    \rho_u \partial_{t} \bvec{u} + \rho_f \partial_{t}\bvec{w} -\DIV{\btens{\sigma}} &= \bvec{f} &\qquad&\text{in $\Omega\times (0,\tF)$},
    \\
    \label{eq:poro:strong:darcydyn}
    \rho_f \partial_{t} \bvec{u} + \rho_w \partial_{t}\bvec{w} + \diff^{-1}\bvec{w} +\GRAD p &= \bvec{h} &\qquad&\text{in $\Omega\times (0,\tF)$},
    \\
     \label{eq:poro:strong:stress}
    \mathcal{A}(\partial_t\btens{\sigma}) +\alpha (d\kappa)^{-1} \partial_t p \btens{I}_d  - \GRADs\bvec{u}  &= \btens{\eta} &\qquad&\text{in $\Omega\times (0,\tF)$},
    \\
    \label{eq:poro:strong:masscons}
    (s_0+ \alpha^2\kappa^{-1}) \partial_t p + \alpha(d\kappa)^{-1}  {\rm tr}(\partial_t\btens{\sigma}) + \DIV\bvec{w} &= g &\qquad&\text{in $\Omega\times (0,\tF)$},
    \\
    \label{eq:poro:strong:initial}
    \bvec{u}(\cdot,0)=\bvec{u}_0, \;\  \bvec{w}(\cdot,0) =\bvec{w}_0\;\ \btens{\sigma}(\cdot,0)=\btens{\sigma}_0, \;\ p(\cdot,0) &=p_0 &\qquad&\text{in $\Omega$},
  \end{alignat}
\end{subequations}
where the right-hand side term in \eqref{eq:poro:strong:stress} is defined by $\btens{\eta}=\partial_t\widetilde{\btens{\eta}}$.

The well-posedness of problem \eqref{eq:poro:strong} has been established in \cite[Section 3]{Meddahi:25} by applying the Hille--Yosida theory. One of the main ingredient needed to establish a stability result for the previous system consists in pointing out the the coupling between the solid and filtration velocities as well as the coupling between the pressure and the stress trace are symmetric and positive, in the sense that the interaction matrices
\begin{equation}\label{eq:R12}
R_1 \coloneq \begin{pmatrix}
\rho_u & \rho_f\\
\rho_f & \rho_w
\end{pmatrix}
\qquad\text{and }\quad
R_2 \coloneq \frac{1}{d\kappa} \begin{pmatrix}
d^{-1} \;& \alpha \\
\alpha \; & s_0 d\kappa + d\alpha^2 
\end{pmatrix}
\end{equation}
are elliptic. Indeed, due to the definition of the model parameters given in Section \ref{sec:model_pb}, we have ${\rm det}(R_1) = (\phi^{-1}-1)\rho_s \rho_f \nu + (\nu -1)\rho_f^2\ge0$ and ${\rm det}(R_2)=s_0 d^{-1}\ge0$.

\section{Variational formulation}\label{sec:weakform}
In this section we introduce some notations that will be used in the sequel, and derive the mixed variational formulation. First, we present a five-field formulation where the stress symmetry is enforce weakly as in \cite{Lee2016,Lee:23}. Then, we introduce a stabilization term that allows to rewrite the previous problem as a four-field system with a penalization of the skew-symmetric part of the stress.

\subsection{Notation for functional spaces}

Let $X\subset\overline{\Omega}$.
Spaces of functions, vector fields, and tensor fields defined over $X$ are respectively denoted by italic capital, boldface Roman capital, and special Roman capital letters.
The subscripts ``s'' and ``sk'' appended to a special Roman capital letter denotes a space of symmetric and skew-symmetric tensor fields, respectively.
Thus, for example, $L^2(X), \Lvec{X}$, and $\Lmatsk{X}$ respectively denote the spaces of square integrable functions, vector fields, and skew-symmetric tensor fields over $X$. 

For any measured set $X$ and any $m\in\Integer$, we denote by $H^m(X)$ the usual Sobolev space of functions that have weak partial derivatives of order up to $m$ in $L^2(X)$. 
We denote by $(\cdot,\cdot)_X$ and $(\cdot,\cdot)_{m,X}$ the usual scalar products in $L^2(X)$ and $H^{m}(X)$ respectively, and by $\norm[X]{{\cdot}}$ and $\norm[m,X]{{\cdot}}$ the induced norms. 
The subscript $X$ is omitted whenever $X=\Omega$.
The notation $\langle \varphi,\xi \rangle$ is used for the duality product between two functions $\varphi\in H^{\frac12}(\partial\Omega)$ and $\xi\in H^{-\frac12}(\partial\Omega)$.
Additionally, let $\Hdiv{X}$ be the subspace of $\Lvec{X}$ spanned by vector-valued functions admitting a weak divergence in $L^2(X)$ with natural norm
$$
\norm[{\rm div},X]{\bvec{v}} \coloneq \left(\norm[X]{\bvec{v}}^2 + h_X^2\norm[X]{\DIV\bvec{v}}^2 \right)^{\frac12},
$$
with $h_X$ denoting the diameter of $X$.
We define $\Hdivmat{X}$ to be the space of tensor-valued functions in $\Lmat{X}$ with weak divergence in $\Lvec{X}$. 

For a vector space $V$ with scalar product $(\cdot,\cdot)_V$, the space $C^{m}(V)\coloneq C^{m}([0,\tF];V)$ is spanned by $V$-valued functions that are $m$-times continuously differentiable in the time interval $[0,\tF]$. The space $C^{m}(V)$ is a Banach space equipped with the norm 
$$
\norm[C^{m}(V)]{\varphi}\coloneq\max_{0\le i\le m}\max_{t\in[0,\tF]}\norm[V]{\partial_t^{i}\varphi(t)}.
$$ 
Similarly, the Hilbert space $H^{m}(V)\coloneq H^{m}((0,\tF);V)$ is spanned by $V$-valued functions of the time interval,  and the norm $\norm[H^{m}(V)]{{\cdot}}$ is induced by the scalar product
$$
(\varphi,\psi)_{H^{m}(V)}=\sum_{j=0}^m \int_0^{\tF} (\partial_t^{j}\varphi(t),\partial_t^{j}\psi(t))_V \ud t
\qquad \text{ for all }\varphi,\psi \in H^{m}(V).
$$

In order to match the boundary conditions in \eqref{eq:boundaryconds}, we introduce $\HvecD{\Omega}$ and $H^1_{0,\Gamma_p}(\Omega)$ as the subspaces of $\Hvec{\Omega}$ and $H^1(\Omega)$ spanned by functions having zero trace on $\Gamma_d$ and $\Gamma_p$, respectively; and define the Hilbert spaces
\begin{align*}
\bvec{\Sigma} &\coloneq\{\btens{\tau} \in \Hdivmat{\Omega}\,:\,\langle\btens{\tau}\ \bvec{n},\bvec{v}\rangle_{\partial\Omega}=0 \ \ \forall \bvec{v}\in \HvecD{\Omega}\},\; 
&& \bvec{U} :=\Lvec{\Omega},
\\
\bvec{W} &:= \{\bvec{z} \in \Hdiv{\Omega}\,:\,\langle\bvec{w}\cdot \bvec{n}, \xi\rangle_{\partial\Omega}=0 \ \ \forall \xi\in H^1_{0,\Gamma_p}(\Omega)\},\;
&& P:= L^2(\Omega).
\end{align*}
For later use, we also define the skew operator: $ \btens{\rm skw}(\btens\tau)=\frac{\btens{\tau}-{\btens\tau}^{\rm T}}2$ for all $\btens{\tau}\in\Real^{d\times d}$, and introduce the short-handed notation $\dot{\varphi} = \partial_t{\varphi}$ for the time derivative. Due to the definition of the ${\rm tr}$, $\btens{\rm skw}$, and $\btens{\rm dev}$ operators, we observe that for all $\btens{\tau},\btens\xi\in\Real^{d\times d}$ 
$$
\begin{aligned}
\btens\tau:{\rm tr}(\btens\xi)\Id &= {\rm tr}(\btens\xi){\rm tr}(\btens\tau), \\
\btens\tau:\btens{\rm skw}(\btens\xi) &= 
\btens{\rm skw}(\btens\tau):\btens{\rm skw}(\btens\xi), \\
\btens\tau:\btens{\rm dev}(\btens\xi) &= 
\btens{\rm dev}(\btens\tau):\btens{\rm dev}(\btens\xi),
\end{aligned}
$$
with the symbol ``$\,:\,$'' denoting the Frobenius product.
In the derivation of the weak formulation of \eqref{eq:poro:strong} in the next section, we will use the previous identities multiple times in both directions. 
For the sake of brevity, throughout the paper we will use the notation $a\lesssim b$ for the inequality $a\le Cb$ with a generic constant $C>0$ independent of the discretization parameters and the model coefficients $\lambda, \rho_s,\rho_f,\rho_w$ and $s_0$.

\subsection{Weak-symmetry and skew-symmetry penalization}

In the mixed formulation of elasticity and poroelasticity problems, the symmetry of the stress tensor $\btens{\sigma}$ has to be explicitly enforced, either strongly in the functional space (see, e.g., \cite{Arnold:02, Yi:14, Visinoni:25}), or weakly by the use of a Lagrange multiplier as in \cite{Arnold.Falk:07,Lee2016, Lee:23}.
In what follows, we opt for this second strategy since it leads to a formulation that can be discretized with low order standard finite element spaces.

We introduce the skew-symmetric part of $\GRAD\bvec{u}$, i.e. $\btens{\rm skw}(\GRAD\bvec{u})$, as additional unknown, named $\btens{\zeta}$, we add to the hyperbolic system \eqref{eq:poro:strong} the equation $\btens{\rm skw}(\dot{\btens{\sigma}})=\bvec{0}$, and we replace equation \eqref{eq:poro:strong:stress} by
$$
\mathcal{A}\dot{\btens{\sigma}} +\alpha (d\kappa)^{-1} \dot{p} \btens{I}_d  - \GRAD\bvec{u}
+ \btens{\zeta} = \btens{\eta} \qquad\text{in $\Omega\times (0,\tF)$}.
$$

The weak formulation of the dynamic poroelasticity problem \eqref{eq:poro:strong} is derived by testing with suitable functions and integrating by parts in \eqref{eq:poro:strong:darcydyn} and \eqref{eq:poro:strong:momentum}. 
Let $\btens{\eta}\in L^2(\Lmat{\Omega})$, $\bvec{f}\in L^2(\Lvec{\Omega})$, $\bvec{h}\in L^2(\Lvec{\Omega})$, $g\in L^2(L^2(\Omega))$, $\bvec{u}_d=\dot{\bvec{d}}_d\in L^2(\bvec{H}^{\frac12}(\partial\Omega))$, and $p_d\in L^2(H^{\frac12}(\partial\Omega))$. For simplicity, we assume
that $\bvec{t} = \bvec{0}$ and $w_n=0$ in \eqref{eq:neumann}. The general case of non-homogeneous essential conditions can be obtained by the $\Hdivmat{\Omega}$-regular lifting of boundary data as in \cite[Appendix A]{Botti.Visinoni:25}. 
The weak formulation with weakly enforced stress symmetry reads: 
for almost every $t\in(0,\tF]$, find $(\bvec{u}(t),\bvec{w}(t), \btens{\sigma}(t), p(t), \btens{\zeta}(t)) \in\bvec{U}\times\bvec{W}\times\bvec{\Sigma}\times P \times \Lmatsk{\Omega}$ such that
\begin{subequations}
\label{eq:biot:weak1}
\begin{alignat}{2}
  \label{eq:biot:weak1:displacement}
  (\rho_u\dot{\bvec{u}} + \rho_f\dot{\bvec{w}}, \bvec{v}) - (\DIV\btens{\sigma},\bvec{v})  
  &=(\bvec{f},\bvec{v}) &\quad&\forall \bvec{v}\in \bvec{U},
  \\
  \label{eq:biot:weak1:flux}
  (\rho_f\dot{\bvec{u}} + \rho_w\dot{\bvec{w}},\bvec{z})\hspace{-.5mm}
  +\hspace{-.5mm}(\diff^{-1}\bvec{w},\bvec{z}) -(p,\DIV\bvec{z}) &= 
  (\bvec{h},\bvec{z}) \hspace{-.5mm} + \hspace{-.5mm}
  \langle p_d,\bvec{z}\cdot\bvec{n}\rangle
  &\quad&\forall \bvec{z}\in\bvec{W}, 
  \\
  \label{eq:biot:weak1:stress}
  (\mathcal{A}\dot{\btens{\sigma}},\btens\tau)\hspace{-.5mm} + \hspace{-.5mm}
  \Big(\alpha \dot{p}, \frac{{\rm tr}(\btens\tau)}{d\kappa}\Big)  
  \hspace{-.5mm} + \hspace{-.5mm} (\bvec{u},\DIV\btens\tau) 
  \hspace{-.5mm} + \hspace{-.5mm} (\btens{\zeta},\btens\tau)&= 
  (\btens{\eta},\btens\tau)
  +\langle \bvec{u}_d,\btens{\tau}\bvec{n}\rangle 
  &\quad&\forall \btens\tau\in \bvec{\Sigma},
  \\
  \label{eq:biot:weak1:pressure}
  \Big(\frac{s_0\kappa+\alpha^2}{\kappa}\dot{p},q\Big) \hspace{-.5mm}
  + \hspace{-.5mm}
  \Big(\frac{{\rm tr}(\dot{\btens{\sigma}})}{d\kappa}, \alpha q\Big)\hspace{-.5mm}
  + \hspace{-.5mm}(\DIV\bvec{w},q) &= (g,q) &\quad&\forall q\in P,
  \\
  \label{eq:biot:weak1:rotation}
  -(\btens{\rm skw}(\dot{\btens{\sigma}}),\btens\xi) &= 0 &\forall \btens\xi &\in \Lmatsk{\Omega}.
\end{alignat}
\end{subequations}

Recalling the definition of $\mathcal{A}$ in \eqref{eq:compliance}, observing that $\btens{\zeta}$ is skew-symmetric, and defining the linear functional $\mathcal{L}_{\btens{\Sigma}}:\bvec{\Sigma}\to\Real$ defined as 
$\mathcal{L}_{\btens{\Sigma}}(\btens{\tau})= (\btens{\eta},\btens\tau)
  +\langle \bvec{u}_d,\btens{\tau}\bvec{n}\rangle$,
we rewrite equation \eqref{eq:biot:weak1:stress} as
\begin{equation}\label{eq:manipule1}
\Big(\frac{\btens{\rm dev}(\dot{\btens\sigma})}{2\mu},\btens\tau\Big)+
\Big(\frac{{\rm tr}(\dot{\btens\sigma}) + \alpha d \dot{p}}{d^2\kappa}, {\rm tr}(\btens\tau)\Big)+
(\btens{\zeta}, \btens{\rm skw}(\btens\tau)) + (\bvec{u},\DIV\btens\tau) =
\mathcal{L}_{\btens{\Sigma}}(\btens{\tau}).
\end{equation}

Now, we introduce a perturbation to the weak-symmetry constraint \eqref{eq:biot:weak1:rotation} by letting $\varepsilon:\Omega\to\Real^+$
\textit{small enough}, and replacing \eqref{eq:biot:weak1:rotation} with
\begin{equation}\label{eq:stab}
(\varepsilon\ \btens{\zeta},\btens\xi) - (\btens{\rm skw}(\dot{\btens{\sigma}}),\btens\xi) = 0.
\end{equation}
From the previous relation, we deduce $(\btens{\zeta},\btens\xi)= (\varepsilon^{-1}\btens{\rm skw}(\dot{\btens{\sigma}}),\btens\xi)$, that, owing to $\btens{\rm skw}(\btens\tau)\in \Lmatsk{\Omega}$ for all $\btens\tau\in \bvec{\Sigma}$, we can insert in \eqref{eq:manipule1} to obtain
\begin{equation*}
\Big(\frac{\btens{\rm dev}(\dot{\btens\sigma})}{2\mu},\btens{\rm dev}(\btens\tau)\Big)+
\Big(\frac{{\rm tr}(\dot{\btens\sigma}) + \alpha d \dot{p}}{d^2\kappa}, {\rm tr}(\btens\tau)\Big)+
\Big(\frac{\btens{\rm skw}(\dot{\btens{\sigma}})}{\varepsilon}, \btens{\rm skw}(\btens\tau)\Big) 
+ (\bvec{u},\DIV\btens\tau) \hspace{-.5mm}= \hspace{-.5mm}
\mathcal{L}_{\btens{\Sigma}}(\btens{\tau}).
\end{equation*}

Denoting the right-hand side term appearing in \eqref{eq:biot:weak1:flux} as the linear functional $\mathcal{L}_{\bvec{W}}(\bvec{z})= (\bvec{h},\bvec{z}) + \langle p_d,\bvec{z}\cdot\bvec{n}\rangle$ and recalling the definitions of the matrices $R_1$ and $R_2$ in \eqref{eq:R12}, we can transform the weak problem \eqref{eq:biot:weak1} into a four-field formulation reading:
for almost every $t\in(0,\tF]$, find $(\bvec{u}(t),\bvec{w}(t), \btens{\sigma}(t), p(t))\in\bvec{U}\times\bvec{W}\times\bvec{\Sigma}\times P$ such that
\begin{subequations}
\label{eq:weak2}
\begin{align}
  \label{eq:weak2:DispFlux}
  \left(R_1\begin{bmatrix}
     \dot{\bvec{u}}    \\
     \dot{\bvec{w}}   
  \end{bmatrix}, 
  \begin{bmatrix}
     \bvec{v}    \\
     \bvec{z}   
  \end{bmatrix}\right)
  +(\diff^{-1}\bvec{w},\bvec{z})
  - \left(\begin{bmatrix}
     \DIV\btens{\sigma}   \\
     p  
  \end{bmatrix},
  \begin{bmatrix}
     \bvec{v}   \\
     \DIV\bvec{z}  
  \end{bmatrix}\right)  \hspace{-.5mm}
  &=\hspace{-.5mm}\begin{bmatrix}
     (\bvec{f},\bvec{v})  \\
     \mathcal{L}_{\bvec{W}}(\bvec{z})
  \end{bmatrix}
  \\
  \label{eq:weak2:StressPress}
  \Big(\frac{\btens{\rm dev}(\dot{\btens\sigma})}{2\mu}\hspace{-.5mm}+\hspace{-.5mm}
  \frac{\btens{\rm skw}(\dot{\btens{\sigma}})}{\varepsilon}
  ,\btens\tau\Big) \hspace{-.5mm}+\hspace{-.5mm}
\left(\hspace{-.2mm} R_2\hspace{-.6mm}\begin{bmatrix}
     {\rm tr}(\dot{\btens\sigma})\hspace{-.5mm}  \\
     \dot{p}
  \end{bmatrix} \hspace{-.5mm},\hspace{-.5mm} 
  \begin{bmatrix}
     {\rm tr}(\btens\tau) \hspace{-.5mm}  \\
     q  
  \end{bmatrix}\right)
\hspace{-.5mm}+\hspace{-.5mm} \left(\begin{bmatrix}
     \bvec{u}   \\
     \DIV\bvec{w} 
  \end{bmatrix} \hspace{-.5mm},\hspace{-.5mm}
  \begin{bmatrix}
      \DIV\btens{\tau}   \\
     q 
  \end{bmatrix}\right)\hspace{-.5mm}
  &=\hspace{-.5mm}\begin{bmatrix}
     \mathcal{L}_{\btens{\Sigma}}(\btens{\tau})  \\
     (g,q)
  \end{bmatrix}
\end{align}
\end{subequations}
for all $(\bvec{v},\bvec{z},\btens\tau,q)\in \bvec{U}\times\bvec{W}\times\bvec{\Sigma}\times P$.
Problem \eqref{eq:weak2} is completed with the initial conditions in \eqref{eq:poro:strong:initial} assuming that $\btens{\sigma}_0\in\Lmats{\Omega}$, $\bvec{u}_0, \bvec{w}_0\in\Lvec{\Omega}$, and $p_0\in L^2(\Omega)$. 

\begin{remark}
The idea of adding a small perturbation in equation \eqref{eq:stab} depending on the parameter $\varepsilon$ is resemblant to the artificial compressibility technique used to stabilized numerical schemes for incompressible fluid flow problem (see, e.g., \cite{Chorin:67}). 
\end{remark}

\subsection{Well-posedness}

This section investigates the well-posedness of the mixed variational problem
\eqref{eq:weak2}. Under additional regularity assumptions on the problem data (forcing terms, boundary data, and initial conditions) the existence of solutions can be inferred by applying the Hille--Yosida theorem as in \cite[Appendix A]{Antonietti.Botti.ea:21} and \cite[Section 3.2]{Meddahi:25}.
In what follows, we focus on proving a stability result under weaker assumptions, which in turn also implies the uniqueness of solutions.

First, we present some preliminary results.
We recall the following inf-sup conditions establishing the stability of the divergence operator.
\begin{lemma}[Inf-sup conditions]
There exist positive constants $\beta_{m}$ (only depending on $\Omega$ and $\Gamma_{\sigma}$) and $\beta_{f}$ (only depending on $\Omega$ and $\Gamma_{w}$) such that 
\begin{align}
\label{eq:infsup_mech}
\beta_{m} \norm[]{\bvec{v}}  &\le \sup_{\btens\tau\in\Hdivmats{\Omega}\cap\bvec{\Sigma}\setminus\{\bvec{0}\}} \frac{\left(\bvec{v},\DIV \btens\tau \right)}{\norm[{\rm div}]{\btens\tau}}
\qquad\forall \bvec{v}\in\bvec{U},
\\
\label{eq:infsup_flux}
\beta_{f} \norm[]{q}  &\le \sup_{\bvec{z}\in\bvec{W}\setminus\{\bvec{0}\}} 
\frac{(q,\DIV \bvec{z})}{\norm[{\rm div}]{\bvec{z}}}
\qquad\qquad\quad\;\ \forall q\in P.
\end{align}
\end{lemma}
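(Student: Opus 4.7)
Both inequalities are of the classical Fortin-lift type for mixed formulations, and the plan is to exhibit, for each element to be controlled, an explicit candidate attaining the supremum (up to a constant) by solving an auxiliary primal elliptic problem whose natural boundary condition coincides with the traction/flux-free condition defining the respective mixed space.

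For \eqref{eq:infsup_mech}, given $\bvec{v}\in\bvec{U}$ I would let $\bvec{r}\in\HvecD{\Omega}$ be the unique weak solution of
\[
(\GRADs\bvec{r},\GRADs\bvec{s})=(\bvec{v},\bvec{s})\qquad\forall\,\bvec{s}\in\HvecD{\Omega}.
\]
Well-posedness follows from the Lax--Milgram lemma combined with Korn's first inequality, valid on $\HvecD{\Omega}$ because $|\Gamma_d|>0$; the same inequality yields the a priori bound $\norm[1]{\bvec{r}}\lesssim\norm{\bvec{v}}$ with a constant depending only on $\Omega$ and $\Gamma_d$. The candidate $\btens{\tau}:=-\GRADs\bvec{r}$ is symmetric by construction, and a distributional integration by parts against arbitrary $\bvec{s}\in\HvecD{\Omega}$ yields $\DIV\btens{\tau}=\bvec{v}$ in $\Lvec{\Omega}$ together with $\langle\btens{\tau}\bvec{n},\bvec{s}\rangle_{\partial\Omega}=0$, so that $\btens{\tau}\in\Hdivmats{\Omega}\cap\bvec{\Sigma}$ with $\norm[{\rm div}]{\btens{\tau}}\lesssim\norm{\bvec{v}}$. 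Substituting into the ratio on the right-hand side of \eqref{eq:infsup_mech} gives
\[
\frac{(\bvec{v},\DIV\btens{\tau})}{\norm[{\rm div}]{\btens{\tau}}}=\frac{\norm{\bvec{v}}^2}{\norm[{\rm div}]{\btens{\tau}}}\gtrsim\norm{\bvec{v}},
\]
which is the desired bound with $\beta_m$ depending only on $\Omega$ and $\Gamma_\sigma$.

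For \eqref{eq:infsup_flux}, I would run the scalar analogue. Given $q\in P$, solve $(\GRAD r,\GRAD s)=(q,s)$ for all $s\in H^1_{0,\Gamma_p}(\Omega)$, using the Poincar\'e--Friedrichs inequality on that space (valid since $|\Gamma_p|>0$) for coercivity and for the bound $\norm[1]{r}\lesssim\norm{q}$. Setting $\bvec{z}:=-\GRAD r$, the same integration-by-parts argument yields $\DIV\bvec{z}=q$ and $\langle\bvec{z}\cdot\bvec{n},\xi\rangle_{\partial\Omega}=0$ for all $\xi\in H^1_{0,\Gamma_p}(\Omega)$, so $\bvec{z}\in\bvec{W}$ with $\norm[{\rm div}]{\bvec{z}}\lesssim\norm{q}$, and the supremum is estimated exactly as above.

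I do not expect a substantial conceptual obstacle; the regularity needed from the auxiliary fields is only $H^1$, which is automatic here since the right-hand sides are $L^2$ data. The only point requiring care is bookkeeping on the boundary condition: the natural boundary condition produced by integration by parts against the chosen test space must coincide precisely with the one defining $\bvec{\Sigma}$ (resp.\ $\bvec{W}$). This is guaranteed by design, because the test space of each auxiliary problem has been selected to be exactly the space of fields against which the normal trace in the definition of the mixed space is required to vanish. The resulting constants $\beta_m$ and $\beta_f$ depend only on $\Omega$ together with $\Gamma_d$ (equivalently $\Gamma_\sigma$) and $\Gamma_p$ (equivalently $\Gamma_w$) respectively, as stated.
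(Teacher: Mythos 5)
Your proof is correct, and it is genuinely more self-contained than what the paper offers: the paper does not carry out the argument at all, but instead cites the literature --- one reference for the fully-essential case $\Gamma_\sigma=\Gamma_w=\partial\Omega$ (where explicit constants are available), Boffi--Brezzi--Fortin for the special case $\Gamma_\sigma=\emptyset$, and two further references for "reasoning as in" remarks covering the general mixed boundary conditions. Your elliptic-lift construction handles the general case directly and uniformly: posing the auxiliary problem on $\HvecD{\Omega}$ (resp.\ $H^1_{0,\Gamma_p}(\Omega)$) makes the natural boundary condition of the lift coincide, by design, with the essential constraint defining $\bvec{\Sigma}$ (resp.\ $\bvec{W}$), and the choice $\btens{\tau}=-\GRADs\bvec{r}$ neatly disposes of the symmetry requirement --- the only genuinely delicate feature of \eqref{eq:infsup_mech} --- since a symmetric gradient is symmetric by definition, so no Bogovskii-type or weak-symmetry machinery is needed. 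The ingredients (Korn's first inequality on $\HvecD{\Omega}$, valid because $|\Gamma_d|>0$; the Poincar\'e--Friedrichs inequality on $H^1_{0,\Gamma_p}(\Omega)$, valid because $|\Gamma_p|>0$; Lax--Milgram; and the $\bvec{H}({\rm div})$ integration-by-parts formula defining the normal trace) are all standard, the non-vanishing of the candidate lift follows from $\DIV\btens{\tau}=\bvec{v}\neq\bvec{0}$, and your constants $\beta_m(\Omega,\Gamma_d)$, $\beta_f(\Omega,\Gamma_p)$ match the stated dependence on $(\Omega,\Gamma_\sigma)$ and $(\Omega,\Gamma_w)$, since each part of a partition of $\partial\Omega$ determines the other. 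What the paper's citation route buys instead is quantitative information --- explicit values of $\beta_m,\beta_f$ in the special cases, which matters for robustness statements --- at the price of leaving the mixed-boundary case implicit; your argument trades explicitness of the constants for a complete, unified proof.
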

\begin{proof}
The detailed proof together with the explicit dependence of the constants $\beta_m$ and $\beta_f$ with respect to the domain $\Omega$ in the case of fully-essential boundary conditions (i.e. $\Gamma_{\sigma}=\Gamma_{w}=\partial\Omega$) is given in \cite[Propositions 1.1 and 1.5]{Botti.Mascotto:25}.
For the alternative special case $\Gamma_{\sigma}=\emptyset$, we refer to \cite[Chapter 9]{Boffi.Brezzi.ea:13}. 
The general case can be obtain by reasoning as in \cite[Lemma A.1]{Beirao.Canuto:21} or \cite[Theorem 1.4]{Botti:25}.
\end{proof}
The next technical result is proved in \cite[Lemma 3]{Cancrini:25}.
\begin{lemma}[$\btens{{\rm dev}-{\rm div}}$ and trace inequalities]
\label{lemma:dev_div}
There exists two positive constants $C_{\rm dd}$ and $C_{\rm tm}$ (only depending on $\Omega$ and $\Gamma_{d}$), and a positive constant $C_{\rm tf}$ (only depending on $\Omega$ and $\Gamma_{p}$) such that
\begin{align}
\label{eq:devdiv}
\norm[]{\btens{\tau}}^2 &\le C_{\rm dd} \left( 
\norm[]{\btens{{\rm dev}}(\btens{\tau})}^2
+h_\Omega^2\norm[]{\DIV\btens{\tau}}^2\right)
\qquad\,\forall \btens{\tau}\in\btens{\Sigma},\\ 
 \label{eq:traceHdivmat}
\norm[\bvec{H}^{-1/2}(\partial\Omega)]{\btens{\tau}\bvec{n}}^2 
&\le C_{\rm tm} \left( 
\norm[]{\btens{{\rm dev}}(\btens{\tau})}^2
+h_\Omega^2\norm[]{\DIV\btens{\tau}}^2\right)
\qquad\, \forall \btens{\tau}\in\btens{\Sigma}, \\
 \label{eq:traceHdiv}
\norm[H^{-1/2}(\partial\Omega)]{\bvec{z}\cdot\bvec{n}} 
&\le C_{\rm tf} 
\norm[{\rm div}]{\bvec{z}}
\qquad\qquad\qquad\qquad\qquad\;\;\,\forall \bvec{z}\in\bvec{W}.
\end{align}
As a consequence, $\norm[\btens\Sigma]{\btens\tau}\coloneq
(\norm[]{\btens{{\rm dev}}(\btens{\tau})^2}
+h_\Omega^2\norm[]{\DIV\btens{\tau}}^2)^{\frac12}$ defines a norm on $\btens\Sigma$.
\end{lemma}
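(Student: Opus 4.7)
The three inequalities can be attacked in the natural order \eqref{eq:traceHdiv}, then \eqref{eq:devdiv}, then \eqref{eq:traceHdivmat}. The bound \eqref{eq:traceHdiv} is just the classical $\Hdiv{\Omega}$ normal-trace inequality: given $\xi \in H^{1/2}(\partial\Omega)$, I would lift it to $\tilde\xi \in H^1(\Omega)$ with a scaled bound of the form $\norm[]{\tilde\xi}^2 + h_\Omega^2\norm[]{\GRAD\tilde\xi}^2 \lesssim h_\Omega\norm[H^{1/2}(\partial\Omega)]{\xi}^2$, apply Green's identity $\langle \bvec z\cdot \bvec n, \xi\rangle = (\bvec z, \GRAD\tilde\xi) + (\DIV \bvec z, \tilde\xi)$, close by Cauchy--Schwarz, and take the supremum over $\xi$.

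For \eqref{eq:devdiv}, I would decompose $\btens\tau = \btens{\rm dev}(\btens\tau) + d^{-1}{\rm tr}(\btens\tau)\Id$ and focus on bounding $\norm[]{{\rm tr}(\btens\tau)}$. The key ingredient is a right-inverse for the divergence: since $|\Gamma_d|>0$, a Bogovskii-type construction (equivalently, the inf-sup property of a mixed Stokes problem with Dirichlet conditions on $\Gamma_d$) produces $\bvec v \in \HvecD{\Omega}$ with $\DIV \bvec v = {\rm tr}(\btens\tau)$ and $\norm[]{\bvec v} + h_\Omega\norm[]{\GRAD\bvec v} \lesssim h_\Omega\norm[]{{\rm tr}(\btens\tau)}$. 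Using ${\rm tr}(\btens\tau)\Id = d(\btens\tau - \btens{\rm dev}(\btens\tau))$ and integrating by parts --- the boundary term vanishes since $\bvec v|_{\Gamma_d}=\bvec 0$ and $\btens\tau\bvec n|_{\Gamma_\sigma}=\bvec 0$ --- I obtain
\[
\norm[]{{\rm tr}(\btens\tau)}^2 = ({\rm tr}(\btens\tau),\DIV\bvec v) = -d(\DIV\btens\tau,\bvec v) - d(\btens{\rm dev}(\btens\tau),\GRAD\bvec v),
\]
and Cauchy--Schwarz together with the bound on $\bvec v$ delivers \eqref{eq:devdiv} after dividing through by $\norm[]{{\rm tr}(\btens\tau)}$.

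The tensor trace inequality \eqref{eq:traceHdivmat} then follows by combining the tensor-valued analogue of \eqref{eq:traceHdiv} --- namely $\norm[\bvec H^{-1/2}(\partial\Omega)]{\btens\tau\bvec n}^2 \lesssim \norm[]{\btens\tau}^2 + h_\Omega^2\norm[]{\DIV\btens\tau}^2$, proved componentwise exactly as in the vector case --- with \eqref{eq:devdiv} used to replace $\norm[]{\btens\tau}$ on the right-hand side. The closing norm statement is then immediate: the expression $(\norm[]{\btens{\rm dev}(\btens\tau)}^2 + h_\Omega^2\norm[]{\DIV\btens\tau}^2)^{1/2}$ is plainly a seminorm, and \eqref{eq:devdiv} supplies the missing definiteness on $\btens\Sigma$. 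I expect the main obstacle to sit inside \eqref{eq:devdiv}, in the construction of $\bvec v$: when $\Gamma_\sigma = \emptyset$ the range of $\DIV$ on $\HvecD{\Omega}$ is restricted to mean-zero data, so an extra argument (absorbing the mean of ${\rm tr}(\btens\tau)$ via a suitable constant tensor in $\Hdivmat{\Omega}$, or via a compatible lifting) is needed to close the estimate. This is also the step through which the dependence of $C_{\rm dd}$ on $\Omega$ and $\Gamma_d$ really enters.
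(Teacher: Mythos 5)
The paper offers no internal proof to compare against: the lemma is imported wholesale, with the single line ``The next technical result is proved in \cite[Lemma 3]{Cancrini:25}.'' What you wrote is therefore a self-contained substitute, and its core is the standard argument and essentially correct: \eqref{eq:traceHdiv} by a scaled lifting and Green's formula; \eqref{eq:devdiv} by the orthogonal splitting $\btens\tau = \btens{\rm dev}(\btens\tau) + d^{-1}{\rm tr}(\btens\tau)\Id$, a divergence right-inverse $\bvec v\in\HvecD{\Omega}$ with $\DIV\bvec v={\rm tr}(\btens\tau)$, and an integration by parts in which the boundary pairing $\langle\btens\tau\bvec n,\bvec v\rangle_{\partial\Omega}$ vanishes precisely by the definition of $\btens\Sigma$; and \eqref{eq:traceHdivmat} by composing the componentwise normal-trace bound with \eqref{eq:devdiv}. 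This is in all likelihood the same mechanism as in the cited reference, and your write-up has the merit of making explicit where each hypothesis enters: the constraint defining $\btens\Sigma$ kills the boundary term, and the surjectivity of the divergence on $\HvecD{\Omega}$ is what brings in the dependence of $C_{\rm dd}$ on $\Gamma_d$.

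One correction to your closing caveat, however. When $\Gamma_\sigma=\emptyset$ the issue is not that your construction of $\bvec v$ needs an extra patch: the inequality \eqref{eq:devdiv} is simply false in that case, so no argument can close it. Indeed, if $\Gamma_d=\partial\Omega$ then the defining pairing of $\btens\Sigma$ is tested only against functions whose trace vanishes on all of $\partial\Omega$, so the constraint is vacuous and $\btens\Sigma=\Hdivmat{\Omega}$; taking $\btens\tau=\Id$ gives $\btens{\rm dev}(\Id)=\bvec 0$ and $\DIV\Id=\bvec 0$ while $\norm[]{\Id}^2=d\,|\Omega|>0$. In particular, ``absorbing the mean of ${\rm tr}(\btens\tau)$ via a suitable constant tensor'' cannot work, since that constant tensor is exactly the uncontrollable part; the known remedies (restricting to $\int_\Omega{\rm tr}(\btens\tau)=0$, or assuming $|\Gamma_\sigma|>0$) change the statement rather than the proof. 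So your argument is complete exactly in the regime $|\Gamma_\sigma|>0$ where the lemma can hold; this restriction is left implicit by the paper (whose standing assumptions only require $|\Gamma_d|>0$ and $|\Gamma_p|>0$) and is presumably part of the hypotheses of \cite[Lemma 3]{Cancrini:25}. Flagging that discrepancy, rather than promising a fix, would have been the sharper conclusion.
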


The previous Lemma is needed to bound the right-hand side terms in \eqref{eq:weak2}. In the next result, given an Hilbert space $\bvec{H}$, we denote by $\norm[\bvec{H}^*]{\cdot}$ the dual norm defined as
$$
\norm[\bvec{H}^*]{\mathcal{L}} \coloneq\sup_{\bvec\xi\in\bvec{H}\setminus\{\bvec{0}\}} 
\frac{\mathcal{L}(\bvec{\xi})}{\norm[\bvec{H}]{\bvec\xi}}.
$$
\begin{lemma}[Bounds for $\mathcal{L}_{\btens{\Sigma}}$ and $\mathcal{L}_{\bvec{W}}$]
    Assume that the forcing and boundary terms 
    $\btens{\eta}$, $\bvec{h}$, $\bvec{u}_d$, and $p_d$ are such that,
    for all $t\in(0,\tF]$, 
    $$
    \btens{\eta}(t)\in \Lmat{\Omega},\quad
    \bvec{h}(t)\in \Lvec{\Omega}, \quad 
    \bvec{u}_d(t)\in \bvec{H}^{1/2}(\partial\Omega), \quad 
    p_d(t)\in H^{1/2}(\partial\Omega).
    $$  
    Then, for all $t\in(0,\tF]$ the linear functionals $\mathcal{L}_{\btens{\Sigma}}(t)$ and $\mathcal{L}_{\bvec{W}}(t)$ in \eqref{eq:weak2} are such that
    \begin{equation}\label{eq:bound.rhs}
    \begin{aligned}
        \norm[\bvec{W}^*]{\mathcal{L}_{\bvec{W}}(t)}&\le \big(
        \norm[]{\bvec{h}(t)} + C_{\rm tf}\norm[H^{1/2}(\partial\Omega)]{p_d(t)}\big), 
        \\
        \norm[\btens\Sigma^*]{\mathcal{L}_{\btens{\Sigma}}(t)}&\le
        \big(
        C_{\rm dd}\norm[]{\btens{\eta}(t)} + C_{\rm tm}\norm[\bvec{H}^{1/2}(\partial\Omega)]{\bvec{u}_d(t)}\big).
    \end{aligned}
    \end{equation}
\end{lemma}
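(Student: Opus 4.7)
The plan is to bound each linear functional by Cauchy--Schwarz for the interior contribution and by the $H^{1/2}$--$H^{-1/2}$ duality pairing for the boundary contribution, then convert the resulting right-hand side to the appropriate test-space norm via the trace bounds supplied by Lemma \ref{lemma:dev_div}. Finally, dividing by the test-space norm and taking the supremum over non-zero test functions produces the dual norm estimate.

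For $\mathcal{L}_{\bvec{W}}$, I would start from the definition $\mathcal{L}_{\bvec{W}}(\bvec{z})=(\bvec{h},\bvec{z})+\langle p_d,\bvec{z}\cdot\bvec{n}\rangle$. Cauchy--Schwarz on the volume term gives $(\bvec{h},\bvec{z})\le \norm{\bvec{h}}\norm{\bvec{z}}$, and duality on the boundary term yields $\langle p_d,\bvec{z}\cdot\bvec{n}\rangle\le \norm[H^{1/2}(\partial\Omega)]{p_d}\,\norm[H^{-1/2}(\partial\Omega)]{\bvec{z}\cdot\bvec{n}}$. Then \eqref{eq:traceHdiv} bounds the boundary norm by $C_{\rm tf}\norm[\rm div]{\bvec{z}}$, and the obvious inequality $\norm{\bvec{z}}\le \norm[\rm div]{\bvec{z}}$ allows factoring out a common $\norm[\rm div]{\bvec{z}}=\norm[\bvec{W}]{\bvec{z}}$. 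Dividing through and taking the supremum delivers the first estimate in \eqref{eq:bound.rhs}.

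For $\mathcal{L}_{\btens{\Sigma}}$, the same scheme applies: Cauchy--Schwarz on $(\btens{\eta},\btens\tau)$ and duality on $\langle \bvec{u}_d,\btens\tau\bvec{n}\rangle$. The key difference is that the natural norm on $\btens{\Sigma}$ is the $\btens{\rm dev}$--$\DIV$ norm from Lemma \ref{lemma:dev_div}, so I invoke \eqref{eq:devdiv} to bound $\norm{\btens\tau}$ by (a constant times) $\norm[\btens\Sigma]{\btens\tau}$ and \eqref{eq:traceHdivmat} to control $\norm[\bvec{H}^{-1/2}(\partial\Omega)]{\btens\tau\bvec{n}}$ likewise. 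Factoring $\norm[\btens\Sigma]{\btens\tau}$ and passing to the supremum yields the second bound.

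There is no real obstacle here; the proof is a direct assembly of Cauchy--Schwarz, trace duality, and Lemma \ref{lemma:dev_div}. The only minor point worth mentioning is that the constants appearing in \eqref{eq:bound.rhs} should formally be $\sqrt{C_{\rm dd}}$ and $\sqrt{C_{\rm tm}}$ since the inequalities \eqref{eq:devdiv}--\eqref{eq:traceHdivmat} are stated for squared norms; this square-rooting (or redefinition of constants) is the only bookkeeping step requiring any attention.
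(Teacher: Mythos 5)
Your proof is correct and follows essentially the same route as the paper: Cauchy--Schwarz on the volume terms, $H^{1/2}$--$H^{-1/2}$ duality on the boundary terms, and the bounds \eqref{eq:traceHdiv}, \eqref{eq:devdiv}, \eqref{eq:traceHdivmat} to pass to the norms $\norm[{\rm div}]{\cdot}$ and $\norm[\btens\Sigma]{\cdot}$ before taking the supremum. Your remark that the constants should formally read $\sqrt{C_{\rm dd}}$ and $\sqrt{C_{\rm tm}}$ (since \eqref{eq:devdiv}--\eqref{eq:traceHdivmat} are stated for squared norms) is a valid observation about the paper's bookkeeping, not a gap in your argument.
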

\begin{proof}
    The first bound in \eqref{eq:bound.rhs} directly follows from the application of the Cauchy-Schwarz inequality together with \eqref{eq:traceHdiv}. For the second, we recall the definition of $\norm[\btens\Sigma]{\btens\tau}$ and apply again the Cauchy-Schwarz inequality followed by \eqref{eq:devdiv} and \eqref{eq:traceHdivmat}.
\end{proof}

We aim to prove a robust stability bound, which also holds in the degenerate cases $s_0=0$, $\rho_f=0$, and $\lambda\to\infty$ (implying $\kappa\to\infty$). For this purpose, given a function $\varphi:[0,t_F]\to\Real$, we introduce the notation $\tilde{\varphi}$ for the integral function such that $\partial_t\tilde{\varphi}=\varphi$ and $\tilde{\varphi}(0)=0$, namely
$\tilde{\varphi}(t) = \int_0^{t_F} \varphi(s) \ud s$.
We adopt the same notation for time-dependent scalar, vector, and tensor fields.
Thus, integrating in time problem \eqref{eq:weak2}, leads to
\begin{subequations}
\label{eq:weak3}
\begin{align}
  \label{eq:weak3:DispFlux}
  \left(R_1\begin{bmatrix}
     \bvec{u}    \\
     \bvec{w}   
  \end{bmatrix}, 
  \begin{bmatrix}
     \bvec{v}    \\
     \bvec{z}   
  \end{bmatrix}\right)
  +(\diff^{-1}\tilde{\bvec{w}},\bvec{z})
  - \left(\begin{bmatrix}
     \DIV\tilde{\btens{\sigma}}   \\
     \tilde{p}  
  \end{bmatrix},
  \begin{bmatrix}
     \bvec{v}   \\
     \DIV\bvec{z}  
  \end{bmatrix}\right)  \hspace{-.5mm}
  &=\hspace{-.5mm}\begin{bmatrix}
     \mathcal{L}_1(\bvec{v})  \\
     \mathcal{L}_2(\bvec{z})
  \end{bmatrix}
  \\
  \label{eq:weak3:StressPress}
  \Big(\frac{\btens{\rm dev}(\btens\sigma)}{2\mu}\hspace{-.5mm}+\hspace{-.5mm}
  \frac{\btens{\rm skw}(\btens{\sigma})}{\varepsilon}
  ,\btens\tau\Big) \hspace{-.5mm}+\hspace{-.5mm}
\left(\hspace{-.2mm} R_2\hspace{-.6mm}\begin{bmatrix}
     {\rm tr}(\btens\sigma)\hspace{-.5mm}  \\
     p
  \end{bmatrix} \hspace{-.5mm},\hspace{-.5mm} 
  \begin{bmatrix}
     {\rm tr}(\btens\tau) \hspace{-.5mm}  \\
     q  
  \end{bmatrix}\right)
\hspace{-.5mm}+\hspace{-.5mm} \left(\begin{bmatrix}
     \bvec{d}   \\
     \DIV\tilde{\bvec{w}} 
  \end{bmatrix} \hspace{-.5mm},\hspace{-.5mm}
  \begin{bmatrix}
      \DIV\btens{\tau}   \\
     q 
  \end{bmatrix}\right)\hspace{-.5mm}
  &=\hspace{-.5mm}\begin{bmatrix}
     \mathcal{L}_3(\btens{\tau})  \\
     \mathcal{L}_4(q)
  \end{bmatrix}
\end{align}
\end{subequations}
for all $(\bvec{v},\bvec{z},\btens\tau,q)\in \bvec{U}\times\bvec{W}\times\bvec{\Sigma}\times P$, with right-hand side terms defined by
$$
\begin{aligned}
\mathcal{L}_1(\bvec{v}) &\coloneq (\tilde{\bvec{f}},\bvec{v}) 
+ (\rho_u\bvec{u}_0 + \rho_f\bvec{w}_0,\bvec{v})
\\
\mathcal{L}_2(\bvec{z}) &\coloneq \tilde{\mathcal{L}}_{\bvec{W}}(\bvec{z})
+ (\rho_f\bvec{u}_0 + \rho_w\bvec{w}_0,\bvec{z})
\\
\mathcal{L}_3(\btens{\tau}) &\coloneq \tilde{\mathcal{L}}_{\btens{\Sigma}}(\btens{\tau})
+ ((2\mu)^{-1}\btens{\rm dev}(\btens\sigma_0),\btens\tau) \hspace{-.5mm}+\hspace{-.5mm}
({\rm tr}(\btens\sigma_0) + \alpha d p_0, (d\kappa)^{-1} {\rm tr}(\btens\tau))
\hspace{-.5mm}+\hspace{-.5mm} (\bvec{d}_0,\DIV\btens\tau)  
\\
\mathcal{L}_4(q) &\coloneq (\tilde{g},q)
+ ((s_0+\kappa^{-1}\alpha^2) p_0,q)
+((d\kappa)^{-1}\alpha\ {\rm tr}(\btens{\sigma}_0), q).
\end{aligned}
$$
Clearly, the previously defined linear functionals $\mathcal{L}_{1-4}$ depend on the problem data $\bvec{f},\btens{\eta},\bvec{h},g,\bvec{u}_d,p_d$ and the initial conditions $\bvec{d_0},\bvec{u_0},\bvec{w_0},\btens{\sigma_0},p_0$. 
For the sake of conciseness, in the next theorem we will make the following assumption:

\medskip
\textit{Assumption 1.} The volumetric forcing and source terms as well as the boundary data appearing in \eqref{eq:weak2} and \eqref{eq:weak3} are $H^1$-regular in time, i.e.
$\btens{\eta}\in H^1(\Lmat{\Omega})$, $\bvec{f}\in H^1(\Lvec{\Omega})$, $\bvec{h}\in H^1(\Lvec{\Omega})$, $g\in H^1(L^2(\Omega))$, $\bvec{u}_d\in H^1(\bvec{H}^{\frac12}(\partial\Omega))$, and $p_d\in H^1(H^{\frac12}(\partial\Omega))$. Additionally, there is a positive constant $C_{\rm pb}$ depending on all the previous quantities, but independent of the possibly unbounded coefficients $s_0^{-1},\kappa,\rho_f^{-1}$, and $\rho_s^{-1}$, such that 
\begin{equation}\label{eq:databound}
\begin{aligned}
\norm[C^0(\Lvec{\Omega})]{\mathcal{L}_1}+
\norm[C^0(\bvec{W}^*)]{\mathcal{L}_2} + 
\norm[C^0(\btens{\Sigma}^*)]{\mathcal{L}_3}+
\norm[C^0(L^2(\Omega))]{\mathcal{L}_4}
&\le C_{\rm pb},\\
\norm[H^1(\Lvec{\Omega})]{\bvec{f}}+
\norm[H^1(\bvec{W}^*)]{\mathcal{L}_{\bvec{W}}} + 
\norm[H^1(\btens{\Sigma}^*)]{\mathcal{L}_{\btens{\Sigma}}}+
\norm[H^1(L^2(\Omega))]{g}
&\le C_{\rm pb}.
\end{aligned}
\end{equation}

In light of \eqref{eq:bound.rhs}, the $H^1$-regularity in time of the data, and the assumption on the initial conditions, we observe that \eqref{eq:databound} is not restrictive.
We are ready to prove the stability estimate for the weak solutions to \eqref{eq:weak2}.
\begin{theorem}[Stability]\label{theo.stab}
Under Assumption 1, the solutions $(\bvec{u},\bvec{w},\btens\sigma,p)\in (\bvec{U},\bvec{W},\btens\Sigma,P)$ of problem \eqref{eq:weak2} are such that, for all $t\in(0,\tF]$,
\begin{equation}\label{eq:stabtheorem}
\begin{aligned}
&\norm[]{(2\mu)^{-\frac12}\btens{\rm dev}(\btens\sigma(t))}^2
+\norm[]{\varepsilon^{-\frac12}\btens{\rm skw}(\btens\sigma(t))}^2 +
+\norm[]{R_1^{\frac12}[\bvec{u},\bvec{w}]^{\rm T}(t)}^2
\\
&\quad
+\norm[]{R_2^{\frac12}[{\rm tr}(\btens{\sigma}),p]^{\rm T}(t)}^2
+ \norm[]{\bvec{d}(t)}^2 \hspace{-.5mm}
+\hspace{-1mm} \int_0^t \hspace{-1mm} \norm[]{\diff^{-\frac12}\bvec{w}(s)}^2\ud s
\lesssim C_{\rm pb}^2, 
\end{aligned}
\end{equation}
where the hidden constant does not depend on the dilatation coefficient $\lambda$, the storativity coefficient $s_0$, and the density parameters $\rho_u,\rho_f$, and $\rho_w$.
\end{theorem}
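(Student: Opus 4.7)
The plan is to test the four-field system \eqref{eq:weak2} against the solution itself: picking $\bvec{v}=\bvec{u}$, $\bvec{z}=\bvec{w}$ in \eqref{eq:weak2:DispFlux} and $\btens{\tau}=\btens{\sigma}$, $q=p$ in \eqref{eq:weak2:StressPress} and summing, the four divergence coupling terms cancel pairwise, while the Frobenius orthogonality of $\btens{\rm dev}$, ${\rm tr}(\cdot)\bvec{I}_d$ and $\btens{\rm skw}$ turns the time-derivative contributions into $\tfrac12\tfrac{d}{dt}$ of $\|R_1^{1/2}[\bvec{u},\bvec{w}]^{\rm T}\|^2+\|(2\mu)^{-1/2}\btens{\rm dev}(\btens{\sigma})\|^2+\|\varepsilon^{-1/2}\btens{\rm skw}(\btens{\sigma})\|^2+\|R_2^{1/2}[{\rm tr}(\btens{\sigma}),p]^{\rm T}\|^2$. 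Together with the coercive $\|\diff^{-1/2}\bvec{w}\|^2$ arising from $(\diff^{-1}\bvec{w},\bvec{w})$, integration in time from $0$ to $t$ yields an energy balance whose left-hand side contains precisely the target quantities of \eqref{eq:stabtheorem} (except $\|\bvec{d}(t)\|^2$), and whose right-hand side is the initial energy plus the time integral of $(\bvec{f},\bvec{u})+\mathcal{L}_{\bvec{W}}(\bvec{w})+\mathcal{L}_{\btens{\Sigma}}(\btens{\sigma})+(g,p)$.

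The main difficulty is controlling this right-hand side robustly: $\bvec{u}$, ${\rm tr}(\btens{\sigma})$ and $p$ are not uniformly bounded in $L^2$ when $R_1$ degenerates or $\kappa\to\infty$, and only the Darcy-type pairing $(\bvec{h},\bvec{w})$ can be absorbed directly by the dissipation. For every other pairing I would exploit the $H^1$-in-time regularity granted by Assumption~1 and integrate by parts in time using $\bvec{u}=\dot{\bvec{d}}$, $\bvec{w}=\dot{\tilde{\bvec{w}}}$, $\btens{\sigma}=\dot{\tilde{\btens{\sigma}}}$, $p=\dot{\tilde{p}}$, so that every offending integral becomes a boundary value at $t$ plus a tail integral in which the time derivative of the data is paired with the antiderivative of the solution. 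The auxiliary norms $\|\bvec{d}\|$, $\|\tilde{\bvec{w}}\|_{{\rm div}}$, $\|\tilde{\btens{\sigma}}\|_{\btens{\Sigma}}$ and $\|\tilde{p}\|$ produced in this way are in turn estimated from the integrated formulation \eqref{eq:weak3}: $\DIV\tilde{\btens{\sigma}}$ and $\DIV\tilde{\bvec{w}}$ can be read off \eqref{eq:weak3:DispFlux} with $\bvec{z}=\bvec{0}$ and \eqref{eq:weak3:StressPress} with $\btens{\tau}=\bvec{0}$ in terms of the components of $R_1[\bvec{u},\bvec{w}]^{\rm T}$ and $R_2[{\rm tr}(\btens{\sigma}),p]^{\rm T}$, for which a matrix Cauchy--Schwarz bound $|R_i \mathbf{y}|\le\|R_i\|^{1/2}|R_i^{1/2}\mathbf{y}|$ preserves $\kappa$- and density-independence; $\|\tilde{p}\|$ follows from the flux inf-sup \eqref{eq:infsup_flux}, $\|\tilde{\btens{\sigma}}\|$ from the ${\rm dev}$-${\rm div}$ inequality \eqref{eq:devdiv}, and $\|\tilde{\bvec{w}}\|$ from Jensen together with the $\diff^{-1}$ coercivity.

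The displacement contribution $\|\bvec{d}(t)\|^2$ is produced by a separate inf-sup argument applied to \eqref{eq:weak3:StressPress}. I use \eqref{eq:infsup_mech} with symmetric test tensors $\btens{\tau}\in\Hdivmats{\Omega}\cap\bvec{\Sigma}$, so that $\btens{\rm skw}(\btens{\tau})=\bvec{0}$ and the penalization term disappears. Setting $q=0$, one isolates $(\bvec{d},\DIV\btens{\tau})$ as a combination of $((2\mu)^{-1}\btens{\rm dev}(\btens{\sigma}),\btens{\rm dev}(\btens{\tau}))$, the coupling $(R_2[{\rm tr}(\btens{\sigma}),p]^{\rm T},[{\rm tr}(\btens{\tau}),0]^{\rm T})$, and $\mathcal{L}_3(\btens{\tau})$; bounding the trace--pressure coupling by Cauchy--Schwarz in the positive semi-definite form of $R_2$ delivers $\|\bvec{d}(t)\|$ pointwise in terms of the controlled quantities appearing in the energy and of $\|\mathcal{L}_3(t)\|_{\btens{\Sigma}^*}$, with weights uniformly bounded in $\lambda$, $s_0$ and the densities.

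Assembling the estimates, the right-hand side splits into three types of contribution: purely data terms bounded by $C_{\rm pb}^2$ via \eqref{eq:databound}, squared solution norms with small prefactors that Young's inequality absorbs into the left-hand side, and time-integral tails in the energy and in $\|\bvec{d}(s)\|^2$ produced by the integration-by-parts procedure. A standard Grönwall argument then closes \eqref{eq:stabtheorem} with a hidden constant independent of $\lambda$, $s_0$, $\rho_u$, $\rho_f$ and $\rho_w$. The hard part, throughout, is the bookkeeping needed to ensure that every pairing of the solution with the data carries a weight remaining bounded in the degenerate limits: this is what forces the combined use of \eqref{eq:weak2} (for the differential energy identity) and \eqref{eq:weak3} (to bound the antiderivative quantities produced by IBP in time), together with the inf-sup and ${\rm dev}$-${\rm div}$ lemmas of Section~\ref{sec:weakform}.
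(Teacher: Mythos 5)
Your proposal is correct and follows essentially the same route as the paper's proof: the same energy identity obtained by testing \eqref{eq:weak2} with the solution itself, the same integration by parts in time to move derivatives onto the data, the same inf-sup arguments for $\norm[]{\bvec{d}}$ (with symmetric test tensors $\btens{\tau}\in\Hdivmats{\Omega}\cap\bvec{\Sigma}$ eliminating the penalization term) and for $\norm[]{\tilde{p}}$, the same divergence bounds extracted from the time-integrated system \eqref{eq:weak3}, and the same Young-absorption plus Gronwall closing step. The only difference is organizational --- you derive the energy balance first and the auxiliary bounds afterwards, while the paper proceeds in the reverse order --- which is immaterial.
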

\begin{remark}
    We point out that the stability estimate in \eqref{eq:stabtheorem} is also valid in the degenerate case in which the densities $\rho_u,\rho_f$ and $\rho_w$ are all equal to zero, which corresponds to the quasi-static Biot model. The method presented in this work can be used without modifications. 
\end{remark}
\begin{proof}
The proof is divided into several steps: \textit{(i)} we establish preliminary bounds on the $L^2$-norms of the solid displacement and pressure integrated in time; \textit{(ii)} we infer the control of $\bvec{H}({\rm div})$-norms of the filtration displacement and integrated stress; \textit{(iii)} we deduce an energy balance for system \eqref{eq:weak2}; and \textit{(iv)} we get the conclusion by exploiting the bounds of the previous steps.

\medskip\noindent
\textit{\textbf{Step (i)}: $L^2$-bounds for $\bvec{d}$ and $\tilde{p}$.} Using the discrete inf-sup condition \eqref{eq:infsup_mech}, equation~\eqref{eq:weak3:StressPress} with $q=0$, and the Cauchy-Schwarz inequality, we infer
$$
\begin{aligned}
\beta_{m} \norm[]{\bvec{d}}  &\le \sup_{\btens\tau\in\Hdivmats{\Omega}\cap\bvec{\Sigma}} \frac{\left(\bvec{d},\DIV \btens\tau \right)}{\norm[{\rm div}]{\btens\tau}}\\
&=\sup_{\btens\tau\in\Hdivmats{\Omega}\cap\bvec{\Sigma}}
\frac{\mathcal{L}_3(\btens{\tau}) -
\Big(\frac{\btens{\rm dev}(\btens\sigma)}{2\mu},\btens\tau\Big) -
\Big(\frac{{\rm tr}(\btens\sigma) + \alpha d p}{d^2\kappa}, {\rm tr}(\btens\tau)\Big)
}{\norm[{\rm div}]{\btens\tau}}\\
&\le
\norm[\btens\Sigma^*]{\mathcal{L}_3} 
+\norm[]{(2\mu)^{-1}\btens{\rm dev}(\btens\sigma)}
+ \norm[]{(d\kappa)^{-1}({\rm tr}(\btens\sigma) + \alpha d p)},
\end{aligned}
$$
Hence, using \eqref{eq:databound} and observing that ${d\kappa}^{-1}\le(2\mu)^{-1}\le {\underline{\mu}}^{-1}$, it follows that
\begin{equation}\label{eq:bound.d}
4 C_1 \norm[]{\bvec{d}}^2 \le C_{\rm pb}^2 
+ \norm[]{(2\mu)^{-\frac12}\btens{\rm dev}(\btens\sigma)}^2 
+ \norm[]{R_2^{\frac12}[{\rm tr}(\btens{\sigma}),p]^{\rm T}}^2,
\end{equation}
with $C_1=\underline{\mu}\beta_m^2/12$.
Similarly, 
using the inf-sup condition \eqref{eq:infsup_flux} together with equation~\eqref{eq:weak3:DispFlux} with $\bvec{v}=\bvec{0}$, we obtain
$$
\beta_{f} \norm[]{\tilde{p}} \le \sup_{\bvec{z}\in\bvec{W}} 
\frac{(\tilde{p},\DIV \bvec{z})}{\norm[{\rm div}]{\bvec{z}}}
\le \norm[\bvec{W}^*]{\mathcal{L}_2} +
\norm[]{\rho_f\bvec{u} + \rho_w\bvec{w}}+
\norm[]{\diff^{-1}\tilde{\bvec{w}}}.
$$
Then, noting that for all $t\in(0,\tF]$ it holds $\norm[]{\diff^{-1}\tilde{\bvec{w}}(t)}^2\le t \underline{K}^{-1}
\int_0^t\norm[]{\diff^{-\frac12}\bvec{w}(s)}^2\ud s$, we get
\begin{equation}\label{eq:bound.p}
4 C_2 \norm[]{\tilde{p}}^2 \le C_{\rm pb}^2 
+ \norm[]{R_1^{\frac12}[\bvec{u},\bvec{w}]^{\rm T}}^2
+ \int_0^t\norm[]{\diff^{-\frac12}\bvec{w}(s)}^2\ud s,
\end{equation}
with $C_2>0$ only depending on $\tF$, $\underline{K}$, $\rho_w$, and $\beta_f$.

\medskip\noindent
\textit{\textbf{Step (ii)}: $\bvec{H}({\rm div})$-bounds for $\tilde{\btens{\sigma}}$ and $\tilde{\bvec{w}}$.} 
Testing equation~\eqref{eq:weak3:DispFlux} with $(\bvec{v},\bvec{z})=(\DIV\tilde{\btens\sigma},\bvec{0})$ and equation~\eqref{eq:weak2:StressPress} with
$(\btens\tau,q)=(\bvec{0},\DIV\tilde{\bvec{w}})$ leads to
$$
\begin{aligned}
\norm[]{\DIV\tilde{\btens\sigma}}^2 &= -\mathcal{L}_1(\DIV\tilde{\btens\sigma}) + 
(\rho_u\bvec{u} + \rho_f\bvec{w},\DIV\tilde{\btens\sigma}),\\
\norm[]{\DIV\tilde{\bvec{w}}}^2 &= \mathcal{L}_4(\DIV\tilde{\bvec{w}}) -
\Big(\frac{s_0\kappa+\alpha^2}{\kappa}p,\DIV\tilde{\bvec{w}}\Big)
-\Big(\frac{\alpha {\rm tr}(\btens{\sigma})}{d\kappa}, \DIV\tilde{\bvec{w}}\Big).
\end{aligned}
$$
Then, applying the Cauchy--Schwarz and Young inequalities and using again \eqref{eq:databound}, it is inferred that 
\begin{equation}\label{eq:bound.div}
 4 C_3 (\norm[]{\DIV\tilde{\btens\sigma}}^2 +\norm[]{\DIV\tilde{\bvec{w}}}^2)
 \le C_{\rm pb}^2 
+ \norm[]{R_1^{\frac12}[\bvec{u},\bvec{w}]^{\rm T}}^2 
+ \norm[]{R_2^{\frac12}[{\rm tr}(\btens{\sigma}),p]^{\rm T}}^2,
\end{equation}
with positive constant $C_3$ independent of the quantities $s_0^{-1},\kappa,\rho_f^{-1}$, and $\rho_s^{-1}$.

\medskip\noindent
\textit{\textbf{Step (iii)}: energy estimate.} 
Taking $(\bvec{v},\bvec{z},\btens\tau,q) = 
(\bvec{u},\bvec{w},\btens\sigma,p)$ in \eqref{eq:weak2}, recalling that $R_1$ and $R_2$ are symmetric and positive definite, and using $\partial_t(\varphi^2)=2\varphi\dot{\varphi}$, we have
$$
\begin{aligned}
\partial_t \Big(
\norm[]{R_1^{\frac12}[\bvec{u},\bvec{w}]^{\rm T}}^2
&+\norm[]{R_2^{\frac12}[{\rm tr}(\btens{\sigma}),p]^{\rm T}}^2
+\norm[]{(2\mu)^{-\frac12}\btens{\rm dev}(\btens\sigma)}^2
+\norm[]{\varepsilon^{-\frac12}\btens{\rm skw}(\btens\sigma)}^2
\Big)\\
&\;\qquad+2\norm[]{\diff^{-\frac12}\bvec{w}}^2 = 2 \big(
(\bvec{f},\bvec{u}) + \mathcal{L}_{\bvec{W}}(\bvec{w}) +
\mathcal{L}_{\btens{\Sigma}}(\btens{\sigma}) + (g,p)
\big).
\end{aligned}
$$
Integrating in time the previous identity, assuming without loss of generality that 
$$
\norm[]{R_1^{\frac12}[\bvec{u}_0,\bvec{w}_0]^{\rm T}}^2
+\norm[]{R_2^{\frac12}[{\rm tr}(\btens{\sigma_0}),p_0]^{\rm T}}^2
+\norm[]{(2\mu)^{-\frac12}\btens{\rm dev}(\btens\sigma_0)}^2
\le C_{\rm pb}^2,
$$
and integrating by parts on the right-hand side terms, we infer that for all $t\in(0,\tF]$ 
\begin{equation}\label{eq:bound.energy}
\begin{aligned}
\Big(
&\norm[]{R_1^{\frac12}[\bvec{u},\bvec{w}]^{\rm T}}^2
+\norm[]{R_2^{\frac12}[{\rm tr}(\btens{\sigma}),p]^{\rm T}}^2
+\norm[]{(2\mu)^{-\frac12}\btens{\rm dev}(\btens\sigma)}^2
+\norm[]{\varepsilon^{-\frac12}\btens{\rm skw}(\btens\sigma)}^2
\Big)(t)\\
&+2\int_0^t\norm[]{\diff^{-\frac12}\bvec{w}(s)}^2\ud s \le 
-2 \int_0^t \big(
(\dot{\bvec{f}},\bvec{d}) + \dot{\mathcal{L}}_{\bvec{W}}(\tilde{\bvec{w}}) +
\dot{\mathcal{L}}_{\tilde{\btens{\Sigma}}}(\tilde{\btens{\sigma}}) + (\dot{g},\tilde{p})
\big)(s) \ud s\\
&\qquad\qquad\qquad\qquad\qquad\quad 
+2 \big(
(\bvec{f},\bvec{d}) + \mathcal{L}_{\bvec{W}}(\tilde{\bvec{w}}) +
\mathcal{L}_{\tilde{\btens{\Sigma}}}(\tilde{\btens{\sigma}}) + (g,\tilde{p})
\big)(t) + C_{\rm pb}^2.
\end{aligned}
\end{equation}

\medskip\noindent
\textit{\textbf{Step (iv)}: conclusion.} We collect the estimates \eqref{eq:bound.d}, \eqref{eq:bound.p}, and \eqref{eq:bound.div} to infer that the left-hand side of \eqref{eq:bound.energy} bounds the $L^2$-norms of $\bvec{d},\tilde{p},\DIV\tilde{\btens\sigma}, \DIV\tilde{\bvec w}$, namely
$$
\begin{aligned}
C_1 \norm[]{\bvec{d}(t)}^2 \hspace{-1mm}
+  C_2 \norm[]{\tilde{p}(t)}^2 \hspace{-1mm}
+ C_3 \norm[]{\DIV\tilde{\btens\sigma}(t)}^2 \hspace{-1mm}
+ C_3\norm[]{\DIV\tilde{\bvec{w}}(t)}^2 
\le C_{\rm pb}^2 \hspace{-1mm}+\hspace{-1mm} 
\int_0^t \hspace{-1mm} \norm[]{\diff^{-\frac12}\bvec{w}(s)}^2\ud s
\\
\frac12\big(\norm[]{R_1^{\frac12}[\bvec{u},\bvec{w}]^{\rm T}}^2
+\norm[]{R_2^{\frac12}[{\rm tr}(\btens{\sigma}),p]^{\rm T}}^2
+\norm[]{(2\mu)^{-\frac12}\btens{\rm dev}(\btens\sigma)}^2
+\norm[]{\varepsilon^{-\frac12}\btens{\rm skw}(\btens\sigma)}^2 \big)(t)
\end{aligned}
$$
Moreover, owing to \eqref{eq:databound} and applying the Cauchy--Schwarz and Young inequalities, we bound the second term in the right-hand side of \eqref{eq:bound.energy} as
$$
\begin{aligned}
\big(
(\bvec{f},\bvec{d}) &+ \mathcal{L}_{\bvec{W}}(\tilde{\bvec{w}}) +
\mathcal{L}_{\tilde{\btens{\Sigma}}}(\tilde{\btens{\sigma}}) + (g,\tilde{p})
\big)(t) \\ &\le 
C_4 C_{\rm pb}^2 + \frac{C_1}2 \norm[]{\bvec{d}(t)}^2 \hspace{-1mm}
+  \frac{C_2}2 \norm[]{\tilde{p}(t)}^2 \hspace{-1mm}
+ C_5 \norm[\btens{\Sigma}]{\tilde{\btens\sigma}(t)}^2 \hspace{-1mm}
+ C_5\norm[{\rm div}]{\tilde{\bvec{w}}(t)}^2 
\\ &\le 
C_4 C_{\rm pb}^2 + \frac{C_1}2 \norm[]{\bvec{d}(t)}^2 \hspace{-1mm}
+  \frac{C_2}2 \norm[]{\tilde{p}(t)}^2 \hspace{-1mm}
+ \frac{C_3}2 \norm[]{\DIV\tilde{\btens\sigma}(t)}^2 \hspace{-1mm}
+ \frac{C_3}2 \norm[]{\DIV\tilde{\bvec{w}}(t)}^2
\\ &\quad+ \frac14 \norm[]{(2\mu)^{-\frac12}\btens{\rm dev}(\btens\sigma)}^2
+\frac12 \int_0^t \hspace{-1mm} \norm[]{\diff^{-\frac12}\bvec{w}(s)}^2\ud s,
\end{aligned}
$$
with positive constants $C_4$ and $C_5$ independent of $s_0^{-1},\kappa,\rho_f^{-1}$, and $\rho_s^{-1}$. Plugging the two previous bounds into \eqref{eq:bound.energy}  and using the second inequality in \eqref{eq:databound} yields
$$
\begin{aligned}
&\norm[]{\bvec{d}(t)}^2 \hspace{-.5mm}
+ \norm[]{\tilde{p}(t)}^2 \hspace{-.5mm}
+ \norm[\btens{\Sigma}]{\tilde{\btens\sigma}(t)}^2 \hspace{-.5mm}
+\norm[]{(2\mu)^{-\frac12}\btens{\rm dev}(\btens\sigma(t))}^2
+\norm[]{\varepsilon^{-\frac12}\btens{\rm skw}(\btens\sigma(t))}^2 +
\\
&\quad\norm[]{R_1^{\frac12}[\bvec{u},\bvec{w}]^{\rm T}(t)}^2
+\norm[]{R_2^{\frac12}[{\rm tr}(\btens{\sigma}),p]^{\rm T}(t)}^2
+ \norm[{\rm div}]{\tilde{\bvec{w}}(t)}^2
+\hspace{-1mm} \int_0^t \hspace{-1mm} \norm[]{\diff^{-\frac12}\bvec{w}(s)}^2\ud s
\\
&\quad\lesssim
C_{\rm pb}^2 + \int_0^t C_{\rm pb} (\norm[]{\bvec{d}(s)} \hspace{-.5mm}
+ \norm[]{\tilde{p}(s)} \hspace{-.5mm}
+ \norm[\btens{\Sigma}]{\tilde{\btens\sigma}(s)} \hspace{-.5mm})
+ \norm[{\rm div}]{\tilde{\bvec{w}}(s)}\big) \ud s.
\end{aligned}
$$
The conclusion follows from the Gronwall-type inequality in \cite[Lemma 1]{Lee2016}. 
\end{proof}


\section{Discrete setting}\label{sec:discrete.setting}

In this section we present the fully discrete formulation of the weak problem \eqref{eq:weak2}. The spatial discretization is based on coupling two mixed finite element methods, one for the Hellinger--Reissner formulation of linear elasticity and the other for mixed Darcy problems.  
Weakening the symmetry constraint as in \eqref{eq:weak2} allows the use of simpler elements for the stress unknown usually requiring less computational cost. For the time discretization, we employ implicit time-advancing schemes such as $\rm{BDF}$ or $\theta$-method. 
For the sake of simplifying the presentation, in this section we focus on the backward Euler scheme.

\subsection{Space and time meshes and discrete spaces}

The spatial discretization is based on a conforming triangulation $\Th$ of $\Omega$, i.e.\ a set of closed triangles or tetrahedra with union $\overline{\Omega}$ and such that, for any distinct $T_1,T_2\in\Th$, the set $T_1\cap T_2$ is either a common edge, a vertex, the empty set or, if $d=3$, a common face. We denote by $h_T$ the diameter of $T$ and we set
$h\coloneq\max_{T\in\Th}h_T>0$. 
Having in mind a $h$-convergence analysis, we consider a sequence $(\Th)_{h\in\mathcal{H}}$ of refined meshes that is shape-regular in the usual sense of Ciarlet \cite{Ciarlet:02}. 

The time mesh is obtained subdividing $[0,\tF]$ into $N\in\Natural^*$ uniform subintervals.
We introduce the time step $\tau\coloneq\tF/N$ and the discrete times $t^n\coloneq n\tau$, $n\in\llbracket 0,N\rrbracket$.
For any vector space $V$, all $\varphi\in C^0(V)$, and all $n\in\llbracket 0,N\rrbracket$, we let, for the sake of brevity, $\varphi^n\coloneq\varphi(t^n)$.
We also let, for all $(\varphi^i)_{0\le i\le N}\in V^{N+1}$ and all $1\le n\le N$,
$$
\delta_t^n\varphi\coloneq\frac{\varphi^n-\varphi^{n-1}}{\tau}\in V
$$
denote the backward approximation of the first derivative of $\varphi$ at time $t^n$. 

On the mesh $\Th$, with $h\in\mathcal{H}$, we introduce appropriate finite dimensional spaces 
\begin{equation}\label{eq:conformity}
\begin{aligned}
\btens{\Sigma}_h &\subset \btens{\Sigma},
\qquad \bvec{W}_h \subset\bvec{W},
\qquad \bvec{U}_h \subset \bvec{U},
\qquad P_h \subset P.
\end{aligned}
\end{equation} 
The discrete space $P_h$ (resp. $\bvec{U}_h$) approximating $L^2(\Omega)$ (resp. $\Lvec{\Omega}$) is usually composed of scalar-valued (resp. vector-valued) piecewise discontinuous polynomial functions on $\Th$.
In Section \ref{sec:numerical_results}, the notation $DG_l$ is used  the space spanned by fully discontinuous polynomials of degree at most $l$. 
The $L^2$-projector $\lproj[h]:L^2(X)\to P_h$ is defined such that
\[
  (\lproj[h]q-q, q_h)=0\quad\text{ for all } q_h\in P_h.
\]
When dealing with the space $\bvec{U}_h$, we use the boldface notation $\vlproj[h]$ for the corresponding $L^2$-projector acting component-wise. 

In what follows, we assume that the pair $(\btens{\Sigma}_h,\bvec{U}_h)$ admits a stable element for the weak-symmetry Hellinger--Reissner formulation of linear elasticity in the sense that, there exist a finite dimensional space $\Lambda_h$ such that the triplet $(\btens{\Sigma}_h,\bvec{U}_h, \Lambda_h)$ satisfies the assumptions in \cite[Section 3]{Lee:23} (see, e.g., the Arnold--Falk--Winther element of \cite{Arnold.Falk:07} and the elements presented in \cite{boffi2009reduced}). 
Moreover, the finite element pair $(\bvec{W}_h,P_h)$ is a stable element for the Darcy problem. In Section \ref{sec:numerical_results}, we will consider the standard Raviart--Thomas and Brezzi-Douglas-Marini families.

\subsection{Fully-discrete problem}

The crucial step in the design of the spatial discretization of problem \eqref{eq:weak2} is the selection of the skew-symmetry penalization parameter $\varepsilon$. It has to be small enough so that the solution to the perturbed problem is close to the one of \eqref{eq:biot:weak1}, meaning that the magnitude of the perturbation matches the accuracy of the discretization error. 
At the same time, taking $\varepsilon$ too small may lead to a restrictive constraint to the space $\btens\Sigma_h$ which would compromise the inf-sup stability. In what follows, we choose $\varepsilon$ to be piecewise constant on $\Th$ such that 
$\varepsilon_{|T} =\gamma h_T^r$ for all $T\in\Th$, where $\gamma>0$ is a user-dependent penalty parameter and $r$ denotes the convergence rate of the $L^2$-approximation in $\btens\Sigma_h$ of a sufficiently regular function.

The discretization in space and time of problem \eqref{eq:weak2} consists in finding, for all $1\le n\le N$, the discrete solutions $(\bvec{u}_h^n,\bvec{w}_h^n, \btens{\sigma}_h^n, p_h^n)\in\bvec{U}_h\times\bvec{W}_h\times\bvec{\Sigma}_h\times P_h$ such that
\begin{equation}\label{eq:semidiscrete}
\begin{aligned}
  \left(R_1\begin{bmatrix}
     \delta_t^n{\bvec{u}_h}    \\
     \delta_t^n{\bvec{w}_h}   
  \end{bmatrix}, 
  \begin{bmatrix}
     \bvec{v}_h    \\
     \bvec{z}_h   
  \end{bmatrix}\right)
  +(\diff^{-1}\bvec{w}_h^n,\bvec{z}_h)
  &- \left(\begin{bmatrix}
     \DIV\btens{\sigma}_h^n   \\
     p_h^n 
  \end{bmatrix},
  \begin{bmatrix}
     \bvec{v}_h   \\
     \DIV\bvec{z}_h  
  \end{bmatrix}\right)  
  =\begin{bmatrix}
     (\bvec{f}^n,\bvec{v}_h)  \\
     \mathcal{L}^n_{\bvec{W}}(\bvec{z}_h)
  \end{bmatrix}
  \\
  \Big(\frac{\btens{\rm dev}(\delta_t^n{\btens\sigma_h})}{2\mu}+
  \frac{\btens{\rm skw}(\delta_t^n{\btens{\sigma_h}})}{\gamma h^r}
  ,\btens\tau_h\Big) +
&\left( R_2\hspace{-.5mm}\begin{bmatrix}
     {\rm tr}(\delta_t^n{\btens\sigma_h}) \\
     \delta_t^n{p_h}
  \end{bmatrix} ,
  \begin{bmatrix}
     {\rm tr}(\btens\tau_h)   \\
     q_h  
  \end{bmatrix}\right)\\
&+ \left(\begin{bmatrix}
     \bvec{u}_h^n   \\
     \DIV\bvec{w}_h^n 
  \end{bmatrix} ,
  \begin{bmatrix}
      \DIV\btens{\tau}_h   \\
     q_h \hspace{-.5mm}
  \end{bmatrix}\right)
  =\hspace{-.5mm}\begin{bmatrix}
     \mathcal{L}^n_{\btens{\Sigma}}(\btens{\tau}_h)\hspace{-.5mm}  \\
     (g^n,q_h)\hspace{-.5mm}
  \end{bmatrix}
\end{aligned}
\end{equation}
for all $(\bvec{v}_h,\bvec{z}_h,\btens\tau_h,q_h)\in \bvec{U}_h\times\bvec{W}_h\times\bvec{\Sigma}_h\times P_h$.
The problem above is completed by the initial conditions  $\btens{\sigma}_{h}^0=\vlproj[h]\btens{\sigma}_{0}$, $\bvec{u}_h^0=\vlproj[h]\bvec{u}_0,\ \bvec{w}_h^0=\vlproj[h]\bvec{w}_0$, and $p_h^0= \lproj[h]p_0$. 
\begin{theorem}
    Suppose that the problem data are as in Assumption 1. 
    Then, for all $1\le n \le N$ problem \eqref{eq:semidiscrete} admits a unique solution $(\bvec{u}_h^n,\bvec{w}_h^n,\btens\sigma_h^n,p_h^n)$ satisfying the a priori estimate 
    \begin{equation}\label{eq:stabdiscrete}
\begin{aligned}
&\norm[]{(2\mu)^{-\frac12}\btens{\rm dev}(\btens\sigma_h^n)}^2
+\norm[]{\varepsilon^{-\frac12}\btens{\rm skw}(\btens\sigma_h^n)}^2 +
+\norm[]{R_1^{\frac12}[\bvec{u}_h^n,\bvec{w}_h^n]^{\rm T}}^2
\\
&\quad
+\norm[]{R_2^{\frac12}[{\rm tr}(\btens{\sigma}_h^n),p_h^n]^{\rm T}(t)}^2
+ \norm[]{\bvec{d}_h^n}^2 \hspace{-.5mm}
+ \sum_{n=1}^N \tau^2 \norm[]{\diff^{-\frac12}\bvec{w}_h^n}^2
\lesssim C_{\rm pb}^2,
\end{aligned}
\end{equation}
with hidden constant not depending on $\lambda$, $s_0$, $\rho_u,\rho_f$, and $\rho_w$, and $C_{\rm pb}$ depending on the problem data as in \eqref{eq:databound}.
\end{theorem}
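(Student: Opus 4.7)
The plan is to mirror the four-step structure of Theorem \ref{theo.stab} at the discrete level, replacing time integrals by Riemann sums, the continuous identity $\partial_t(\varphi^2)=2\varphi\dot\varphi$ by the classical inequality $2\varphi^n\delta_t^n\varphi\ge\delta_t^n(\|\varphi\|^2)$ (valid for any symmetric positive semidefinite bilinear form), and integration by parts in time by Abel summation. Once the a priori bound \eqref{eq:stabdiscrete} is in hand, existence and uniqueness at each step follow for free: problem \eqref{eq:semidiscrete} is a square linear system in finite dimension, and the estimate applied to the homogeneous problem (zero data, vanishing previous step) forces its kernel to be trivial.

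Concretely, I would introduce the discrete running sums $\bvec d_h^n\coloneqq\tau\sum_{k=1}^n\bvec u_h^k$, $\tilde{\bvec w}_h^n\coloneqq\tau\sum_{k=1}^n\bvec w_h^k$, $\tilde{\btens\sigma}_h^n\coloneqq\tau\sum_{k=1}^n\btens\sigma_h^k$, and $\tilde p_h^n\coloneqq\tau\sum_{k=1}^n p_h^k$, and obtain the time-summed counterpart of \eqref{eq:semidiscrete} analogous to \eqref{eq:weak3}. In Step (i), the discrete inf-sup built into the assumed stable triplet $(\btens\Sigma_h,\bvec U_h,\Lambda_h)$ yields a bound on $\|\bvec d_h^n\|$: restricting to test tensors whose $L^2$-projection of $\btens{\rm skw}(\btens\tau_h)$ onto $\Lambda_h$ vanishes gives an inf-sup for $\DIV$ on that subspace, and reproduces the argument leading to \eqref{eq:bound.d}. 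The bound on $\|\tilde p_h^n\|$ follows from the discrete inf-sup for $(\bvec W_h,P_h)$. Step (ii) then produces $\bvec H({\rm div})$-bounds for $\tilde{\btens\sigma}_h^n$ and $\tilde{\bvec w}_h^n$ by testing with $\DIV$-valued test functions, exploiting the conformity \eqref{eq:conformity} and the inclusions $\DIV\btens\Sigma_h\subset\bvec U_h$, $\DIV\bvec W_h\subset P_h$ which hold for the standard stable pairs considered.

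For Step (iii), I would test \eqref{eq:semidiscrete} with $(\bvec u_h^n,\bvec w_h^n,\btens\sigma_h^n,p_h^n)$ and telescope in $n$. The symmetric positive definiteness of $R_1$ and $R_2$ and the discrete time-derivative inequality yield a telescoping identity controlling $\|R_1^{1/2}[\bvec u_h^n,\bvec w_h^n]^{\rm T}\|^2$, $\|R_2^{1/2}[{\rm tr}(\btens\sigma_h^n),p_h^n]^{\rm T}\|^2$, $\|(2\mu)^{-1/2}\btens{\rm dev}(\btens\sigma_h^n)\|^2$, $\|\varepsilon^{-1/2}\btens{\rm skw}(\btens\sigma_h^n)\|^2$, and $\tau\sum_{k=1}^n\|\diff^{-1/2}\bvec w_h^k\|^2$ up to right-hand side contributions. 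Abel summation shifts the discrete time derivatives onto the data (controlled by \eqref{eq:databound}), producing terms paired with the running sums introduced above. Combining with Steps (i)--(ii) and applying the discrete Gronwall inequality of \cite[Lemma 1]{Lee2016} concludes the proof. A careful tracking of constants shows that $\lambda$, $s_0$, and the density parameters never enter the final bound, preserving the robustness of \eqref{eq:stabtheorem}.

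The delicate point will be Step (i). In the continuous proof, strongly symmetric test tensors eliminate the $(\btens{\rm skw}(\btens\sigma),\btens\tau)$ contribution, a move unavailable discretely because $\btens\Sigma_h$ typically contains no symmetric subspace rich enough to sustain the divergence inf-sup. The standard remedy for penalized weak-symmetry formulations is to restrict to the subspace on which the $\Lambda_h$-projection of the skew part vanishes -- there the inf-sup for $\DIV$ holds by assumption, and the residual $\varepsilon^{-1}$-penalty contribution, bounded via Cauchy--Schwarz against $\|\varepsilon^{-1/2}\btens{\rm skw}(\btens\sigma_h)\|$ from Step (iii), is absorbed thanks to the local scaling $\varepsilon_{|T}=\gamma h_T^r$ and an inverse inequality on $\btens\Sigma_h$. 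Verifying that this absorption produces at most a benign $\gamma$-dependence in the hidden constant, and is compatible with \eqref{eq:databound}, is the main technical obstacle.
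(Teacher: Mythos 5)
Your proposal follows essentially the same route as the paper's proof, which is itself only a short sketch: existence is reduced to uniqueness and hence to the a priori bound (problem \eqref{eq:semidiscrete} being a square finite-dimensional linear system), and the bound is obtained by rerunning the four steps of Theorem \ref{theo.stab} with time integrals replaced by sums and the identity $\partial_t(\varphi^2)=2\varphi\dot{\varphi}$ replaced by $2\tau\,\varphi^n\delta_t^n\varphi=(\varphi^n)^2+\tau^2(\delta_t^n\varphi)^2-(\varphi^{n-1})^2$. Your running sums, Abel summation onto the data, use of the inclusions $\DIV\btens{\Sigma}_h\subset\bvec{U}_h$ and $\DIV\bvec{W}_h\subset P_h$ in Step (ii), and the appeal to the discrete Gronwall lemma of \cite{Lee2016} are faithful discrete transcriptions of Steps (ii)--(iv) and are correct.

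The genuine gap is in your Step (i), which you rightly single out as the delicate point; note that the paper's proof passes over it in silence, even though the continuous Step (i) tests with \emph{strongly symmetric} fields (the supremum in \eqref{eq:infsup_mech} runs over $\Hdivmats{\Omega}\cap\bvec{\Sigma}$), which annihilates the penalty term --- a device with no discrete counterpart. Restricting to $\btens{\Sigma}_h^0\coloneq\{\btens\tau_h\in\btens{\Sigma}_h:(\btens\tau_h,\btens\xi_h)=0\ \forall\btens\xi_h\in\Lambda_h\}$ does give the divergence inf-sup (it follows from the Fortin construction underlying the stable-triplet assumption), but the leftover penalty contribution cannot be absorbed the way you describe. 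After Cauchy--Schwarz it reads $\norm[]{\varepsilon^{-\frac12}\btens{\rm skw}(\btens\sigma_h^n)}\,\norm[]{\varepsilon^{-\frac12}\btens{\rm skw}(\btens\tau_h)}$; Step (iii) bounds the first factor by $C_{\rm pb}$, but the second is of order $\gamma^{-\frac12}h^{-\frac{r}{2}}\norm[]{\btens{\rm skw}(\btens\tau_h)}$ on quasi-uniform meshes, so closing the estimate would require the gain $\norm[]{\btens{\rm skw}(\btens\tau_h)}\lesssim h^{\frac{r}{2}}\norm[{\rm div}]{\btens\tau_h}$ on $\btens{\Sigma}_h^0$, i.e.\ a \emph{positive} power of $h$. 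Inverse inequalities go in the opposite direction, and the zero-$\Lambda_h$-moment property only yields $\norm[T]{\btens{\rm skw}(\btens\tau_h)}\lesssim h_T\seminorm[1,T]{\btens\tau_h}\lesssim\norm[T]{\btens\tau_h}$ (Poincar\'e followed by an inverse estimate), with no net gain. Consequently your bound on $\norm[]{\bvec{d}_h^n}$ retains a factor $\gamma^{-\frac12}h^{-\frac{r}{2}}$, which propagates through Step (iv) and destroys the $h$-uniformity of the hidden constant in \eqref{eq:stabdiscrete} (recall that $\lesssim$ is declared independent of the discretization parameters). What would close the argument is control of $\norm[]{\varepsilon^{-1}\btens{\rm skw}(\btens\sigma_h^n)}$ rather than $\norm[]{\varepsilon^{-\frac12}\btens{\rm skw}(\btens\sigma_h^n)}$ --- immediate from the combined inf-sup applied to the pair $(\bvec{d}_h^n,\varepsilon^{-1}\btens{\rm skw}(\btens\sigma_h^n))\in\bvec{U}_h\times\Lambda_h$ whenever $\btens{\rm skw}(\btens{\Sigma}_h)\subseteq\Lambda_h$, but for the AFW-type elements envisaged here ($\Lambda_h$ of one polynomial degree lower than $\btens{\Sigma}_h$) the $(I-P_{\Lambda_h})$ remainder stays uncontrolled. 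In short: the obstacle you flagged is real, your proposed absorption does not overcome it, and the paper's own one-paragraph proof shares the same unaddressed point.
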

\begin{proof}
Since \eqref{eq:semidiscrete} corresponds to a set of squared linear systems, uniqueness of solutions implies its existence.  
The uniqueness of solutions can be inferred from \eqref{eq:stabdiscrete}. Indeed, setting to zero the forcing terms, boundary conditions and initial data, makes the right-hand side vanish and, as a result, the only solution would be $(\bvec{u}_h^n,\bvec{w}_h^n,\btens\sigma_h^n,p_h^n)=\bvec{0}$ for all $1\le n\le N$.

A stability estimate as \eqref{eq:stab} holds for the spatial semi-discrete solutions owing to the conformity property \eqref{eq:conformity} of the discrete spaces. Then, in order to prove \eqref{eq:stabdiscrete}, one proceeds as in the proof of Theorem \ref{theo.stab} by replacing integration in time with sum between $1\le n \le N$ and the formula $2\varphi\dot{\varphi}=\partial_t(\varphi^2)$ with its discrete counterpart 
$$2\tau \ \varphi^n \delta_t^n \varphi 
= (\varphi^n)^2 + \tau^2(\delta_t^n \varphi)^2 -(\varphi^{n-1})^2.
\vspace{-6mm}
$$
\end{proof}

For the sake of conciseness, we refer to \cite[Section 3]{Lee:23} for the  convergence analysis with respect to the mesh size $h$ and time step $\tau$ in the case of FEM discretization with weakly-symmetric stress field. 
We postpone the detailed error analysis of the method with skew-symmetry penalization proposed here to a future work, where we also aim to consider more alternatives for the discrete pair $(\btens{\Sigma}_h,\bvec{U}_h)$.

\section{Numerical results}
\label{sec:numerical_results}
The aim of this section is to assess the performance of the proposed method in terms of accuracy and robustness. Moreover, it demonstrates its applicability to physically relevant test cases. All computations are performed in \texttt{FEniCS} \cite{Alnaes2015}. We test different choices of discrete spaces for the test presented in these sections. For all the three tests we consider $\btens{\Sigma}_h \times \bvec{U}_h \times P_h = \mathbb{BDM}_{l+1} \times \textbf{DG}_{l} \times DG_{l}$. The difference relies in the space of the approximated filtration velocity. Indeed, in Section~\ref{sec:conv_test} and in the first test of  Section~\ref{sec:rob_test} we consider $\bvec{W}_h = RT_{l}$, while in the second test of Section~\ref{sec:rob_test} and in Section~\ref{sec:wave} we consider it to be $\bvec{W}_h = BDM_{l+1}$. 

\subsection{Convergence test}
\label{sec:conv_test}
We set $\Omega = (0,1)^2$ and consider the following manufactured analytical solution:
$$
    \begin{aligned}
        \bvec{d}(\bvec{x},t) & \ = t^2 \left( \begin{aligned}
            & \sin(2\pi y)(\cos(2\pi x)-1) + \frac{1}{\mu+\lambda}\sin(\pi x)\sin(\pi y) ,\\
            & \sin(2\pi x)(1-\cos(2\pi y)) + \frac{1}{\mu+\lambda} \sin(\pi x)\sin(\pi y)
        \end{aligned} \right), \\
        p(\bvec{x},t) & \ = t^2 \, \sin(\pi x) \sin(\pi y);
    \end{aligned}
$$
Dirichlet boundary conditions and forcing terms are set accordingly. The model coefficients are reported in Table~\ref{tab:params_convtest}. 
\begin{table}[ht]
	\centering 
	\begin{tabular}{r|l c c  r|l c c  r|l c c  r|l }
		$\rho_u \ [\si[per-mode = symbol]{\kilogram \per \metre\cubed}]$ & 1 & & & 
		$\rho_f \ [\si[per-mode = symbol]{\kilogram \per \metre\cubed}]$&  0.25 & & &
		$\rho_w \ [\si[per-mode = symbol]{\kilogram \per \metre\cubed}]$ & 1  & & &
		$\mathbf{K} \ [\si[per-mode = symbol]{\meter\squared \per \pascal\per\second}]$ & $\mathbf{I}$ \\
        $\mu \ [\si[per-mode = symbol]{\pascal}]$ & 1 & & & 
		$\lambda \ [\si[per-mode = symbol]{\pascal}]$&  10 & & &
		$s_0 \ [\si[per-mode = symbol]{\kilogram \per \metre\cubed}]$ & 0.002  & & &
		$\alpha \ [-]$ & 1
	\end{tabular}
	\caption{Convergence tests of Section~\ref{sec:conv_test}: model parameters}
	\label{tab:params_convtest}
\end{table}
We test the convergence of the scheme with respect to the mesh size $h$ assuming that the time step $\tau$ is taken small enough. We consider a sequence of successively refined triangular meshes setting $l=0$. 
In Figure~\ref{fig:conv_p1}, we show the computed errors versus the mesh-size $h$ (loglog scale). 

\begin{figure}[ht]
\begin{subfigure}{1\textwidth}
\centering
\begin{subfigure}[b]{0.4\textwidth}
\begin{tikzpicture}
\begin{axis}[%
width=3.4cm,
height=2.6cm,
at={(0\textwidth,0\textwidth)},
scale only axis,
xmode=log,
xmin=4,
xmax=75,
xminorticks=true,
xlabel={$1/h$},
ymode=log,
ymin=3e-5,
ymax=0.08,
yminorticks=true,
ylabel={$L^2$-errors},
legend style={draw=none,fill=none,legend cell align=left},
legend pos=south west
]
\addplot [color=myred,solid,line width=1.5pt, mark=diamond*,mark options={color=myred}]
  table[row sep=crcr]{
    6.31459315    0.06412655   \\
    12.63640172   0.02309921  \\
    25.14845433   0.01071273  \\
    50.28722371   0.00529773 \\
};

\addplot [color=myblue,solid,line width=1.5pt,mark=square*,mark options={color=myblue}]
  table[row sep=crcr]{
    6.31459315    3.61789688e-03  \\
    12.63640172   5.97728251e-04   \\
    25.14845433   1.50019345e-04   \\
    50.28722371   5.19131421e-05  \\
};

\addplot [color=mygreen,solid,line width=1.5pt,mark=triangle*,mark options={color=mygreen}]
  table[row sep=crcr]{
     6.31459315   0.03036132  \\
    12.63640172   0.00748607 \\
    25.14845433   0.00181166   \\
    50.28722371   0.00045368 \\
};

\addplot [color=myyellow,solid,line width=1.5pt,mark=triangle*,mark options={color=myyellow}]
  table[row sep=crcr]{
     6.31459315   0.04580474   \\
    12.63640172   0.01298907    \\
    25.14845433   0.00341849 \\
    50.28722371   0.00092995     \\
};

\addplot [color=black,solid,line width=0.5pt]
  table[row sep=crcr]{
 25.14845433      0.016 \\
 50.28722371      0.016 \\
 50.28722371      0.008 \\
 25.14845433      0.016 \\ 
};
\node[right, align=left, text=black, font=\footnotesize]
at (axis cs:50.28722371 ,0.012) {1}; 

\addplot [color=black,solid,line width=0.5pt]
  table[row sep=crcr]{
 25.14845433      0.0052 \\
 50.28722371      0.0052 \\
 50.28722371      0.0013 \\
 25.14845433      0.0052 \\ 
};
\node[right, align=left, text=black, font=\footnotesize]
at (axis cs:50.28722371 ,0.003) {2}; 

\end{axis}
\end{tikzpicture}
\end{subfigure}
\hfill
\begin{subfigure}[b]{0.56\textwidth}
\begin{tikzpicture}
\begin{axis}[%
width=3.4cm,
height=2.6cm,
at={(0\textwidth,0\textwidth)},
scale only axis,
xmode=log,
xmin=4,
xmax=75,
xminorticks=true,
xlabel={$1/h$},
ymode=log,
ymin=0.0001,
ymax=0.2,
yminorticks=true,
ylabel={$\text{H}(div)$-errors},
legend style={draw=none,fill=none,legend cell align=left},
legend pos=outer north east
]

\addplot [color=myred,solid,line width=1.5pt, mark=diamond*,mark options={color=myred}]
  table[row sep=crcr]{
    6.31459315    100000\\
    12.63640172   100000 \\
};
\addlegendentry{\scriptsize $\mathbf{u}(\mathbf{x},t)$}

\addplot [color=myyellow,solid,line width=1.5pt,mark=triangle*,mark options={color=myyellow}]
  table[row sep=crcr]{
    6.31459315    100000\\
    12.63640172   100000 \\
};
\addlegendentry{\scriptsize $p(\mathbf{x},t)$}

\addplot [color=myblue,solid,line width=1.5pt,mark=square*,mark options={color=myblue}]
  table[row sep=crcr]{
    6.31459315    0.00746005   \\
    12.63640172   0.00239072  \\
    25.14845433   0.00075618     \\
    50.28722371   0.00025774  \\
};
\addlegendentry{\scriptsize $\mathbf{w}(\mathbf{x},t)$}

\addplot [color=mygreen,solid,line width=1.5pt,mark=triangle*,mark options={color=mygreen}]
  table[row sep=crcr]{
     6.31459315   0.15181387   \\
    12.63640172   0.03655778  \\
    25.14845433   0.00965954   \\
    50.28722371   0.0031448 \\
};
\addlegendentry{\scriptsize $\boldsymbol{\sigma}(\mathbf{x},t)$}

\addplot [color=black,solid,line width=0.5pt]
  table[row sep=crcr]{
 25.14845433      0.016 \\
 50.28722371      0.016 \\
 50.28722371      0.008 \\
 25.14845433      0.016 \\ 
};
\node[right, align=left, text=black, font=\footnotesize]
at (axis cs:50.28722371 ,0.011) {1}; 

\end{axis}
\end{tikzpicture}
\end{subfigure}
\end{subfigure}
\caption{Convergence test of Section~\ref{sec:conv_test}: computed errors in $L^2$-norm (left) and $\text{H}({\rm div})$-norm (right) versus $1/h$ (\textit{log-log} scale). The errors are computed at the final time $T_f$. The polynomial degree of approximation is set to be $l = 0$.}
\label{fig:conv_p1}
\end{figure}
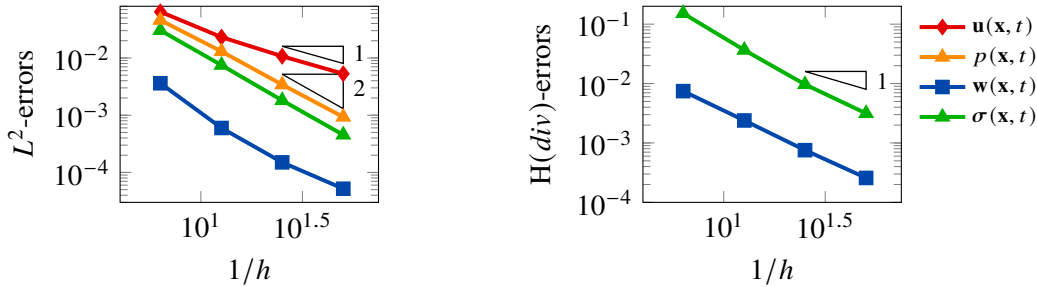

We observe that the $L^2$-error for the velocity field decreases as $h$. 
We observe that, when using the $BDM_{1}$ space for the filtration velocity, the convergence rate of the best $L^2$-approximation is $2$, however due to the coupling with the solid velocity field, its order of accuracy is reduced and it turns out to be suboptimal. Indeed, we observed that, if we off the coupling in the hyperbolic terms (i.e. $\rho_f$, $\rho_w$ = 0), we recover the $h^2$ accuracy for $\bvec{w}$ (cf. Figure~\ref{fig:rob_2}). 
The $L^2$-errors for the stress and the pressure field decrease as $h^2$. For what concerns the $\text{H}({\rm div})$-errors for the stress and the filtration velocity, they decrease better than $h$, that corresponds to the expected accuracy in light of the theory.

It is worth noticing that, with respect to \cite{Lee:23}, where the same discrete spaces are used for the five-fields formulation of the low-frequency poroelasticity model, we gain one order of accuracy for the computed stress field.

\subsection{Robustness test}
\label{sec:rob_test}

The aim of this section is to assess the robustness properties of the scheme. To this aim, we consider the same exact solutions, forcing terms, boundary conditions, and discretization parameters of Section~\ref{sec:conv_test} and test the scheme with two different configurations of the model parameters. First, we consider the limit case of quasi-incompressible materials (i.e. $\lambda \rightarrow \infty$) and null specific storage coefficient (i.e. $s_0 = 0$). We consider the same material properties of Table~\ref{tab:params_convtest}, but taking $s_0 = 0$ and $\lambda = 10^{6}$. Second, we consider the case in which $\rho_f =0$ (then, $\rho_w = 0$ as well) which is of interest when we consider the Biot model with the inertial term on displacement only \cite{Bonetti2025}. As in the previous Section, we observe the behavior of $L^2$- and $\text{H}({\rm div})$-errors with respect to the mesh size $h$. In Figure~\ref{fig:rob_1}, Figure~\ref{fig:rob_2} we display the results.

\begin{figure}[ht]
\begin{subfigure}{1\textwidth}
\centering
\begin{subfigure}[b]{0.4\textwidth}
\begin{tikzpicture}
\begin{axis}[%
width=3.4cm,
height=2.6cm,
at={(0\textwidth,0\textwidth)},
scale only axis,
xmode=log,
xmin=4,
xmax=75,
xminorticks=true,
xlabel={$1/h$},
ymode=log,
ymin=3e-5,
ymax=0.08,
yminorticks=true,
ylabel={$L^2$-errors},
legend style={draw=none,fill=none,legend cell align=left},
legend pos=south west
]
\addplot [color=myred,solid,line width=1.5pt, mark=diamond*,mark options={color=myred}]
  table[row sep=crcr]{
    6.31459315    0.06409415  \\
    12.63640172   0.02307243  \\
    25.14845433   0.01070214  \\
    50.28722371   0.00529448 \\
};

\addplot [color=myblue,solid,line width=1.5pt,mark=square*,mark options={color=myblue}]
  table[row sep=crcr]{
    6.31459315    3.51850197e-03  \\
    12.63640172   5.32133759e-04  \\
    25.14845433   1.28338176e-04 \\
    50.28722371   4.78267998e-05   \\
};

\addplot [color=mygreen,solid,line width=1.5pt,mark=triangle*,mark options={color=mygreen}]
  table[row sep=crcr]{
     6.31459315   0.03093045  \\
    12.63640172   0.00761154  \\
    25.14845433   0.00183654  \\
    50.28722371   0.00045388 \\
};

\addplot [color=myyellow,solid,line width=1.5pt,mark=triangle*,mark options={color=myyellow}]
  table[row sep=crcr]{
     6.31459315   0.04974325  \\
    12.63640172   0.01248573    \\
    25.14845433   0.00312425 \\
    50.28722371   0.00078422    \\
};

\addplot [color=black,solid,line width=0.5pt]
  table[row sep=crcr]{
 25.14845433      0.016 \\
 50.28722371      0.016 \\
 50.28722371      0.008 \\
 25.14845433      0.016 \\ 
};
\node[right, align=left, text=black, font=\footnotesize]
at (axis cs:50.28722371 ,0.012) {1}; 

\addplot [color=black,solid,line width=0.5pt]
  table[row sep=crcr]{
 25.14845433      0.0052 \\
 50.28722371      0.0052 \\
 50.28722371      0.0013 \\
 25.14845433      0.0052 \\ 
};
\node[right, align=left, text=black, font=\footnotesize]
at (axis cs:50.28722371 ,0.003) {2}; 

\end{axis}
\end{tikzpicture}
\end{subfigure}
\hfill
\begin{subfigure}[b]{0.56\textwidth}
\begin{tikzpicture}
\begin{axis}[%
width=3.4cm,
height=2.6cm,
at={(0\textwidth,0\textwidth)},
scale only axis,
xmode=log,
xmin=4,
xmax=75,
xminorticks=true,
xlabel={$1/h$},
ymode=log,
ymin=0.0001,
ymax=0.2,
yminorticks=true,
ylabel={$\text{H}(div)$-errors},
legend style={draw=none,fill=none,legend cell align=left},
legend pos=outer north east
]
\addplot [color=myred,solid,line width=1.5pt, mark=diamond*,mark options={color=myred}]
  table[row sep=crcr]{
    6.31459315    10000 \\
    12.63640172   10000 \\
};
\addlegendentry{\scriptsize $\mathbf{u}(\mathbf{x},t)$}

\addplot [color=myyellow,solid,line width=1.5pt,mark=triangle*,mark options={color=myyellow}]
  table[row sep=crcr]{
    6.31459315    10000 \\
    12.63640172   10000 \\
};
\addlegendentry{\scriptsize $p(\mathbf{x},t)$}

\addplot [color=myblue,solid,line width=1.5pt,mark=square*,mark options={color=myblue}]
  table[row sep=crcr]{
    6.31459315    0.00325743 \\
    12.63640172   0.00094051  \\
    25.14845433   0.00038833 \\
    50.28722371    0.00018626  \\
};
\addlegendentry{\scriptsize $\mathbf{w}(\mathbf{x},t)$}

\addplot [color=mygreen,solid,line width=1.5pt,mark=triangle*,mark options={color=mygreen}]
  table[row sep=crcr]{
     6.31459315   0.1512762   \\
    12.63640172   0.03617485   \\
    25.14845433   0.00950198  \\
    50.28722371   0.00310008   \\
};
\addlegendentry{\scriptsize $\boldsymbol{\sigma}(\mathbf{x},t)$}

\addplot [color=black,solid,line width=0.5pt]
  table[row sep=crcr]{
 25.14845433      0.016 \\
 50.28722371      0.016 \\
 50.28722371      0.008 \\
 25.14845433      0.016 \\ 
};
\node[right, align=left, text=black, font=\footnotesize]
at (axis cs:50.28722371 ,0.011) {1};  

\end{axis}
\end{tikzpicture}
\end{subfigure}
\end{subfigure}
\caption{Robustness test of Section~\ref{sec:rob_test} ($s_0 = 0$, $\lambda = 10^6$): computed errors in $L^2$-norm (left) and $\text{H}({\rm div})$-norm (right) versus $1/h$ (\textit{log-log} scale). The errors are computed at the final time $T_f$. The polynomial degree of approximation is set to be $l = 0$.}
\label{fig:rob_1}
\end{figure}
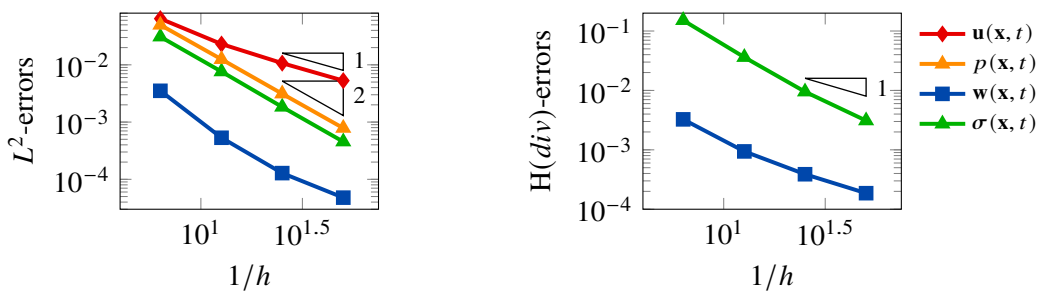

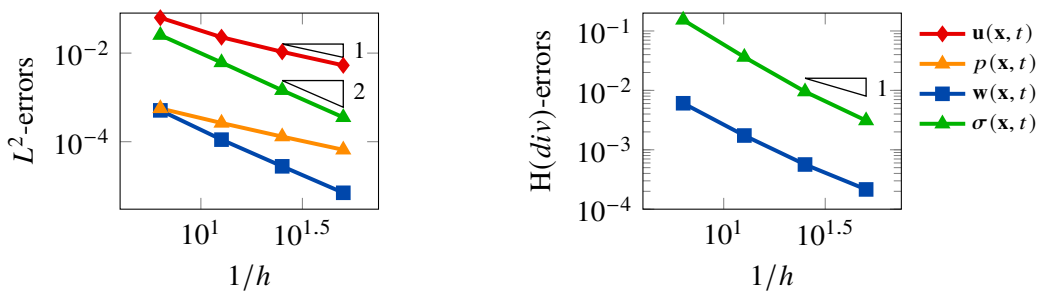
\begin{figure}[ht]
\begin{subfigure}{1\textwidth}
\centering
\begin{subfigure}[b]{0.4\textwidth}
\begin{tikzpicture}
\begin{axis}[%
width=3.4cm,
height=2.6cm,
at={(0\textwidth,0\textwidth)},
scale only axis,
xmode=log,
xmin=4,
xmax=75,
xminorticks=true,
xlabel={$1/h$},
ymode=log,
ymin=3e-6,
ymax=0.08,
yminorticks=true,
ylabel={$L^2$-errors},
legend style={draw=none,fill=none,legend cell align=left},
legend pos=south west
]
\addplot [color=myred,solid,line width=1.5pt, mark=diamond*,mark options={color=myred}]
  table[row sep=crcr]{
    6.31459315    0.06281493   \\
    12.63640172   0.02284122 \\
    25.14845433   0.01068046   \\
    50.28722371   0.00529374  \\
};

\addplot [color=myblue,solid,line width=1.5pt,mark=square*,mark options={color=myblue}]
  table[row sep=crcr]{
    6.31459315    5.09829813e-04 \\
    12.63640172   1.11370465e-04   \\
    25.14845433   2.78327116e-05    \\
    50.28722371   7.03534969e-06   \\
};

\addplot [color=mygreen,solid,line width=1.5pt,mark=triangle*,mark options={color=mygreen}]
  table[row sep=crcr]{
     6.31459315   0.02537697   \\
    12.63640172   0.00613197   \\
    25.14845433   0.00144587  \\
    50.28722371   0.00035435 \\
};

\addplot [color=myyellow,solid,line width=1.5pt,mark=triangle*,mark options={color=myyellow}]
  table[row sep=crcr]{
     6.31459315   5.63477676e-04  \\
    12.63640172   2.64306605e-04  \\
    25.14845433   1.30202555e-04  \\
    50.28722371   6.56016239e-05  \\
};

\addplot [color=black,solid,line width=0.5pt]
  table[row sep=crcr]{
 25.14845433      0.016 \\
 50.28722371      0.016 \\
 50.28722371      0.008 \\
 25.14845433      0.016 \\ 
};
\node[right, align=left, text=black, font=\footnotesize]
at (axis cs:50.28722371 ,0.012) {1}; 

\addplot [color=black,solid,line width=0.5pt]
  table[row sep=crcr]{
 25.14845433      0.0024 \\
 50.28722371      0.0024 \\
 50.28722371      0.0006 \\
 25.14845433      0.0024 \\ 
};
\node[right, align=left, text=black, font=\footnotesize]
at (axis cs:50.28722371 ,0.0014) {2}; 

\end{axis}
\end{tikzpicture}
\end{subfigure}
\hfill
\begin{subfigure}[b]{0.56\textwidth}
\begin{tikzpicture}
\begin{axis}[%
width=3.4cm,
height=2.6cm,
at={(0\textwidth,0\textwidth)},
scale only axis,
xmode=log,
xmin=4,
xmax=75,
xminorticks=true,
xlabel={$1/h$},
ymode=log,
ymin=0.0001,
ymax=0.2,
yminorticks=true,
ylabel={$\text{H}(div)$-errors},
legend style={draw=none,fill=none,legend cell align=left},
legend pos=outer north east
]
\addplot [color=myred,solid,line width=1.5pt, mark=diamond*,mark options={color=myred}]
  table[row sep=crcr]{
    6.31459315    10000 \\
    12.63640172   10000 \\
};
\addlegendentry{\scriptsize $\mathbf{u}(\mathbf{x},t)$}

\addplot [color=myyellow,solid,line width=1.5pt,mark=triangle*,mark options={color=myyellow}]
  table[row sep=crcr]{
    6.31459315    10000 \\
    12.63640172   10000 \\
};
\addlegendentry{\scriptsize $p(\mathbf{x},t)$}

\addplot [color=myblue,solid,line width=1.5pt,mark=square*,mark options={color=myblue}]
  table[row sep=crcr]{
    6.31459315    0.00606919  \\
    12.63640172   0.00173802   \\
    25.14845433   0.00056518  \\
    50.28722371   0.00021498 \\
};
\addlegendentry{\scriptsize $\mathbf{w}(\mathbf{x},t)$}

\addplot [color=mygreen,solid,line width=1.5pt,mark=triangle*,mark options={color=mygreen}]
  table[row sep=crcr]{
     6.31459315   0.15328625   \\
    12.63640172   0.03628422    \\
    25.14845433   0.00951534   \\
    50.28722371   0.00309927    \\
};
\addlegendentry{\scriptsize $\boldsymbol{\sigma}(\mathbf{x},t)$}

\addplot [color=black,solid,line width=0.5pt]
  table[row sep=crcr]{
 25.14845433      0.016 \\
 50.28722371      0.016 \\
 50.28722371      0.008 \\
 25.14845433      0.016 \\ 
};
\node[right, align=left, text=black, font=\footnotesize]
at (axis cs:50.28722371 ,0.011) {1};  

\end{axis}
\end{tikzpicture}
\end{subfigure}
\end{subfigure}
\caption{Robustness test of Section~\ref{sec:rob_test} ($\rho_f = \rho_w = 0$): computed errors in $L^2$-norm (left) and $\text{H}({\rm div})$-norm (right) versus $1/h$ (\textit{log-log} scale). The errors are computed at the final time $T_f$. The polynomial degree of approximation is set to be $l = 0$.}
\label{fig:rob_2}
\end{figure}

For what concerns Figure~\ref{fig:rob_1}, we observe that both the $L^2$- and $\text{H}({\rm div})$-errors respect the estimated order of accuracy and are in agreement with the results found in Section~\ref{sec:conv_test}. Moreover, we observe that the absolute values of the errors are almost the same of the ones observed in Section~\ref{sec:conv_test}. Then, we can conclude that the proposed scheme is robust with respect to the quasi-incompressible case and null specific storage coefficient.

In Figure~\ref{fig:rob_2} we observe that the $\text{H}({\rm div})$-errors behave as in the convergence test and in the first robustness test. Regarding the $L^2$-errors, we observe the exact order of accuracy predicted by the theory. It is interesting to notice that, with respect to Figure~\ref{fig:conv_p1} and Figure~\ref{fig:rob_1}, the $L^2$-error of the pressure field decays as $h$, while the $L^2$-error of the filtration velocity field decreases as $h^2$. In this case, as we neglect the coupling between the solid velocity and the filtration velocity, we are able to recover the second-order accuracy for $\bvec{w}$. We remark again that, to obtain this result, we need to use $BDM_{1}$ as approximation space of $\bvec{w}$ instead of $RT_0$.

\subsection{Wave propagation in a poroleastic medium}
\label{sec:wave}
In this section we test the proposed method in a physically-sound test case inspired by \cite{Bonetti2023, Meddahi:25}. We investigate a wave propagation problem in a homogeneous and isotropic poroelastic medium. The computational domain is $\Omega = (0,4800)\times(0,4800) \si{\meter\squared}$ and we place an explosive source \cite{Meddahi:25, Morency2008} at position $\mathbf{x} = (2400, 2400)\si{\meter}$
$$
\bvec{f}(\bvec{x},t) = S(t) \left\{
\begin{aligned}
    & \left(1 - \frac{\| \bvec{r} \|^2}{4h^2} \right)\frac{\bvec{r}}{\|\bvec{r}\|} \qquad \text{if} \, \|\bvec{r}\| < 2h \\
    & 0 \qquad \text{otherwise}
\end{aligned}
\right.
$$
where $S(t) = (1 - 2\pi^2 f_0^2 (t-t_0)^2)\exp(-\pi^2 f_0^2(t-t_0)^2)$, $f_0$ is the peak frequency, $t_0$ is the time shift parameter, $h$ corresponds to the mesh size, and the vector $\bvec{r} = (x-2400, y-2400)^T$ denotes the distance from the source position. This choice of the forcing term is often used in the context of earthquakes and induces a smooth, radially symmetric force distribution around the source point and the temporal function $S(t)$ generates a Ricker wavelet.

The homogeneous isotropic medium is characterized by the following physical parameters, cf. Table~\ref{tab:params_wave}.
\begin{table}[ht]
	\centering 
	\begin{tabular}{l|l c c  l|l c c  l|l}
		$\rho_u \ [\si[per-mode = symbol]{\kilogram \per \metre\cubed}]$ & 1700 & & & 
		$\rho_f \ [\si[per-mode = symbol]{\kilogram \per \metre\cubed}]$&  950 & & &
		$\rho_w \ [\si[per-mode = symbol]{\kilogram \per \metre\cubed}]$ & 4750  \\
		$\mathbf{K} \ [\si[per-mode = symbol]{\meter\squared \per \pascal\per\second}]$ & $\num[exponent-product=\ensuremath{\cdot}]{6.6667e-10}\mathbf{I}$ & & & 
        $\mu \ [\si[per-mode = symbol]{\pascal}]$ & \num[exponent-product=\ensuremath{\cdot}]{7.2073e+9} & & & 
		$\lambda \ [\si[per-mode = symbol]{\pascal}]$ & \num[exponent-product=\ensuremath{\cdot}]{4.3738e+9} \\
		$s_0 \ [\si[per-mode = symbol]{\kilogram \per \metre\cubed}]$ &  \num[exponent-product=\ensuremath{\cdot}]{1.462e-10}  & & &
		$\alpha \ [-]$ & 0.029
	\end{tabular}
	\caption{Wave propagation test of Section~\ref{sec:wave}: pooelastic medium parameters}
	\label{tab:params_wave}
\end{table}
Finally, the problem is closed by homogeneous Dirichlet boundary conditions. Since we are using Dirichlet boundary conditions, in our simulation we take into account the results in which the wavefront has not yet touched the boundary of the domain. The mesh is made of $N = 120000$ elements ($h\sim51\si{\meter}$), the polynomial degree of approximation is set to be $l = 1$, and the time-step parameter for the backward Euler time marching scheme is set to be $\Delta t = 0.005$. Last, the final time of simulation is $T_f = 1\si{\second}$.
In the following, we denote by $\bvec{u}_h$ the solid velocity (i.e. $\dot{\bvec{d}}_h$), by $u_{h,y}$ its vertical component, and by $p_{h}$ the pressure field.
We report in Figure~\ref{fig:TestFisico_v}, Figure~\ref{fig:TestFisico_vy}, and Figure~\ref{fig:TestFisico_p}, the computed quantities $|\bvec{u}_h|$, $u_{h,y}$, and $p_{h}$ at selected time instants, respectively.
\begin{figure}[ht]
\begin{subfigure}[b]{.33\textwidth}
    \centering
    \includegraphics[width=1\textwidth]{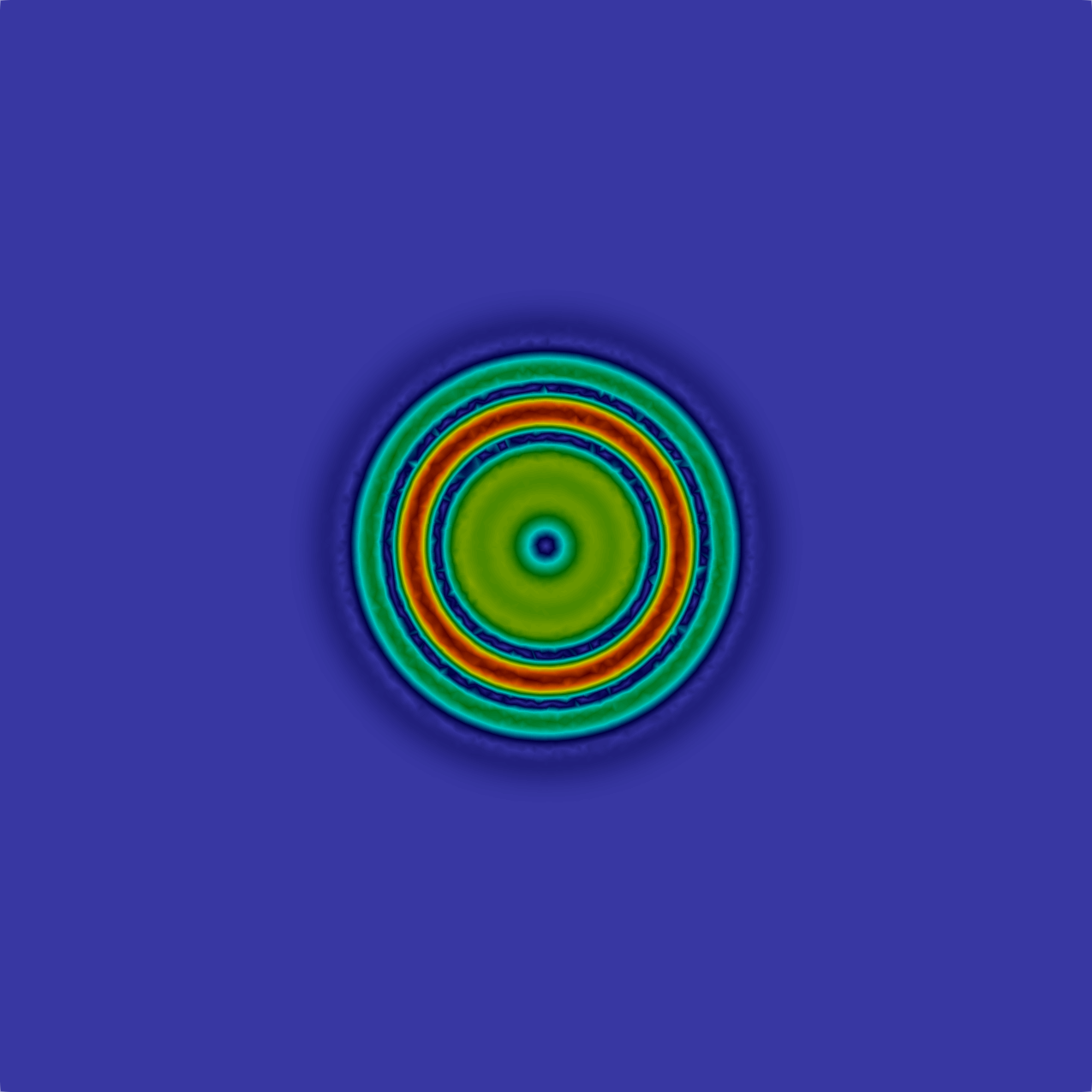}
    \label{fig:TestFisico_vy_08}
\end{subfigure}
\begin{subfigure}[b]{.33\textwidth}
    \centering
    \includegraphics[width=1\textwidth]{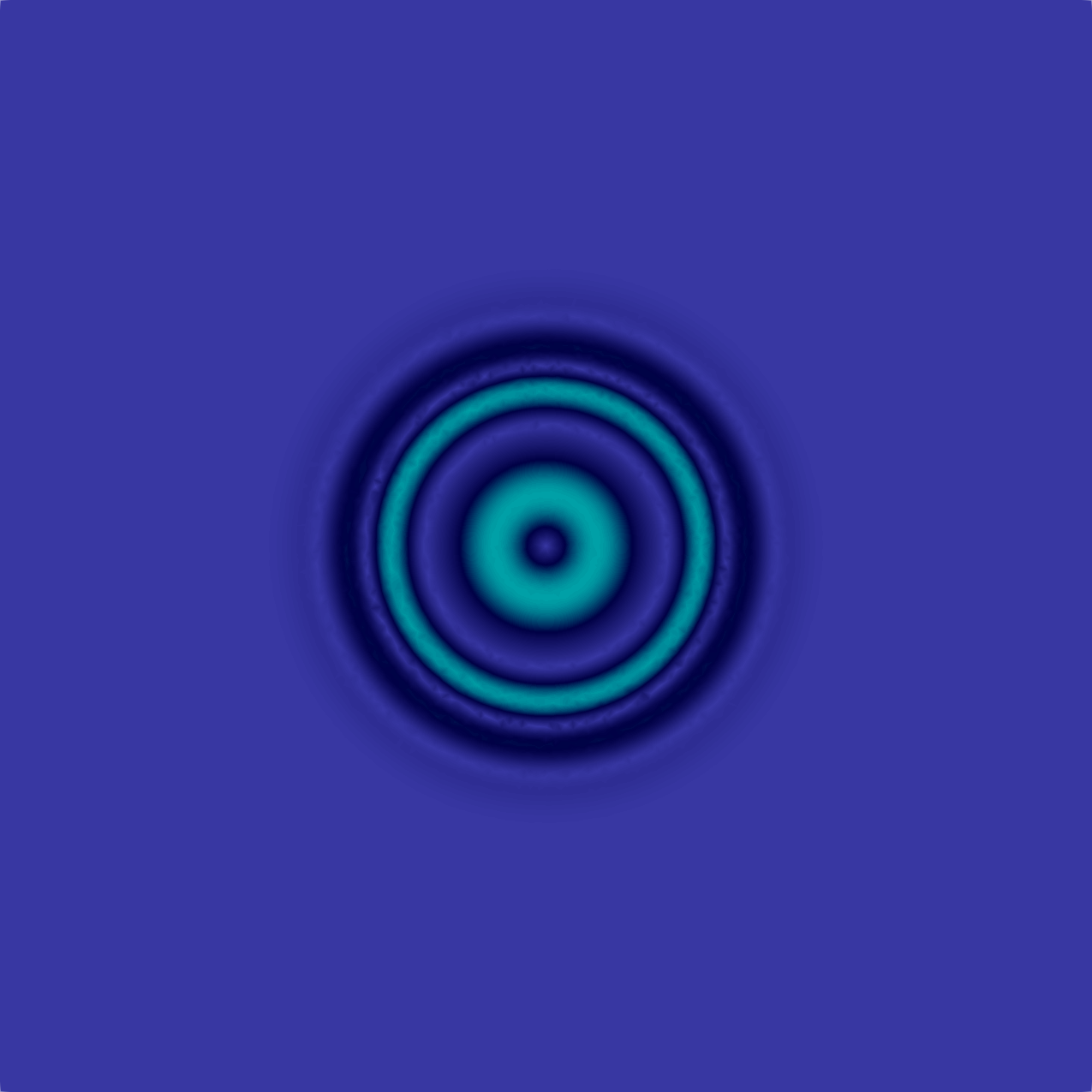}
    \label{fig:TestFisico_vy_09}
\end{subfigure}
\begin{subfigure}[b]{.33\textwidth}
    \centering
    \includegraphics[width=1\textwidth]{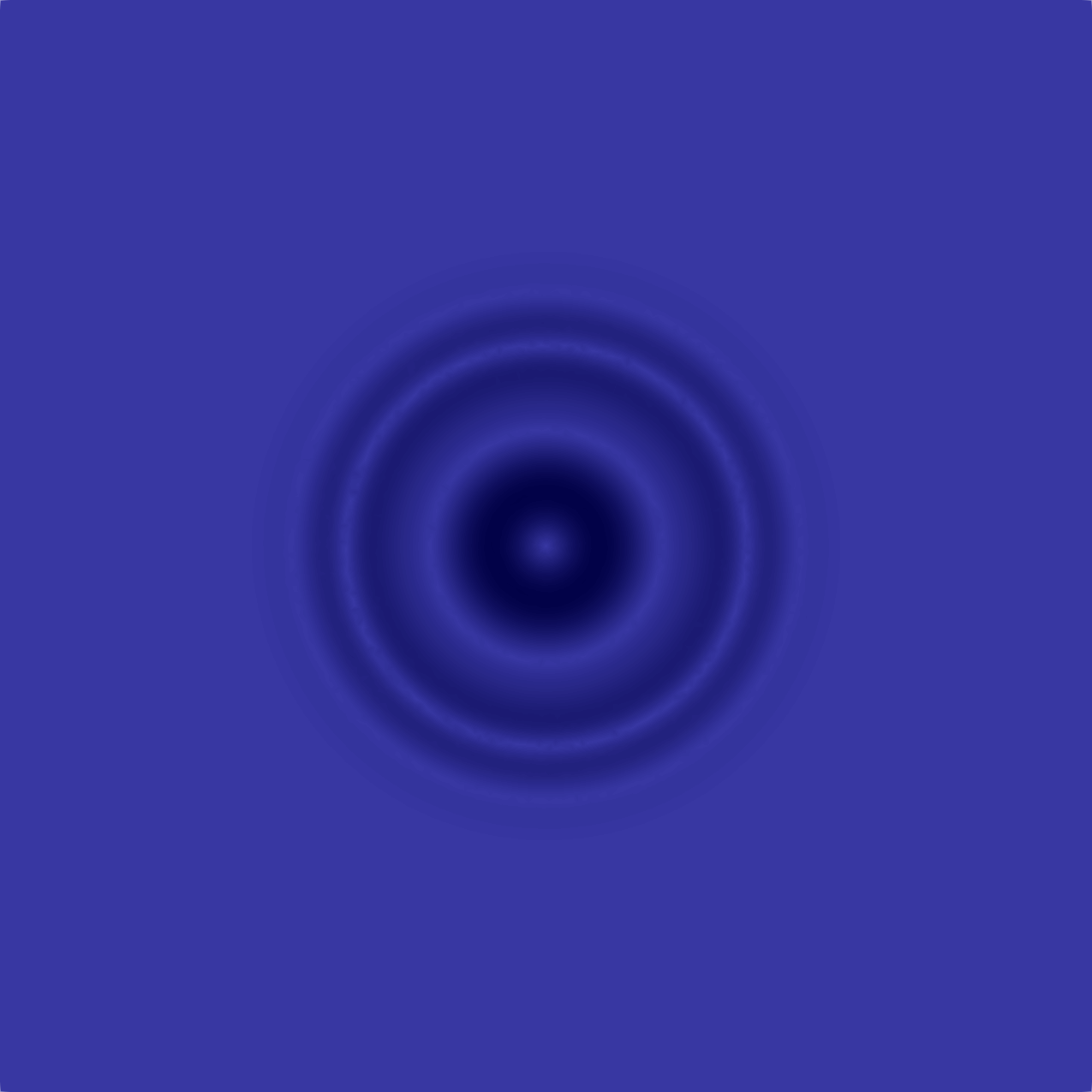}
    \label{fig:TestFisico_vy_1}
\end{subfigure}

\vspace{-1.1cm}
\hspace{0.1cm}
\includegraphics[width=0.3\textwidth]{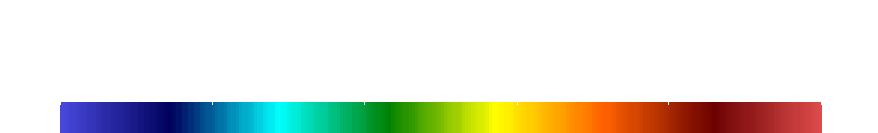}

\caption{Wave propagation in poroelastic medium: modulus of the computed velocity field $|\bvec{u}_{h}|$ at the time instants $t=0.8 \si{\second}$ (left), $t=0.9 \si{\second}$ (center), $t=1 \si{\second}$ (right).}
\label{fig:TestFisico_v}
\end{figure}

\begin{figure}[ht]
\begin{subfigure}[b]{.33\textwidth}
    \centering
    \includegraphics[width=1\textwidth]{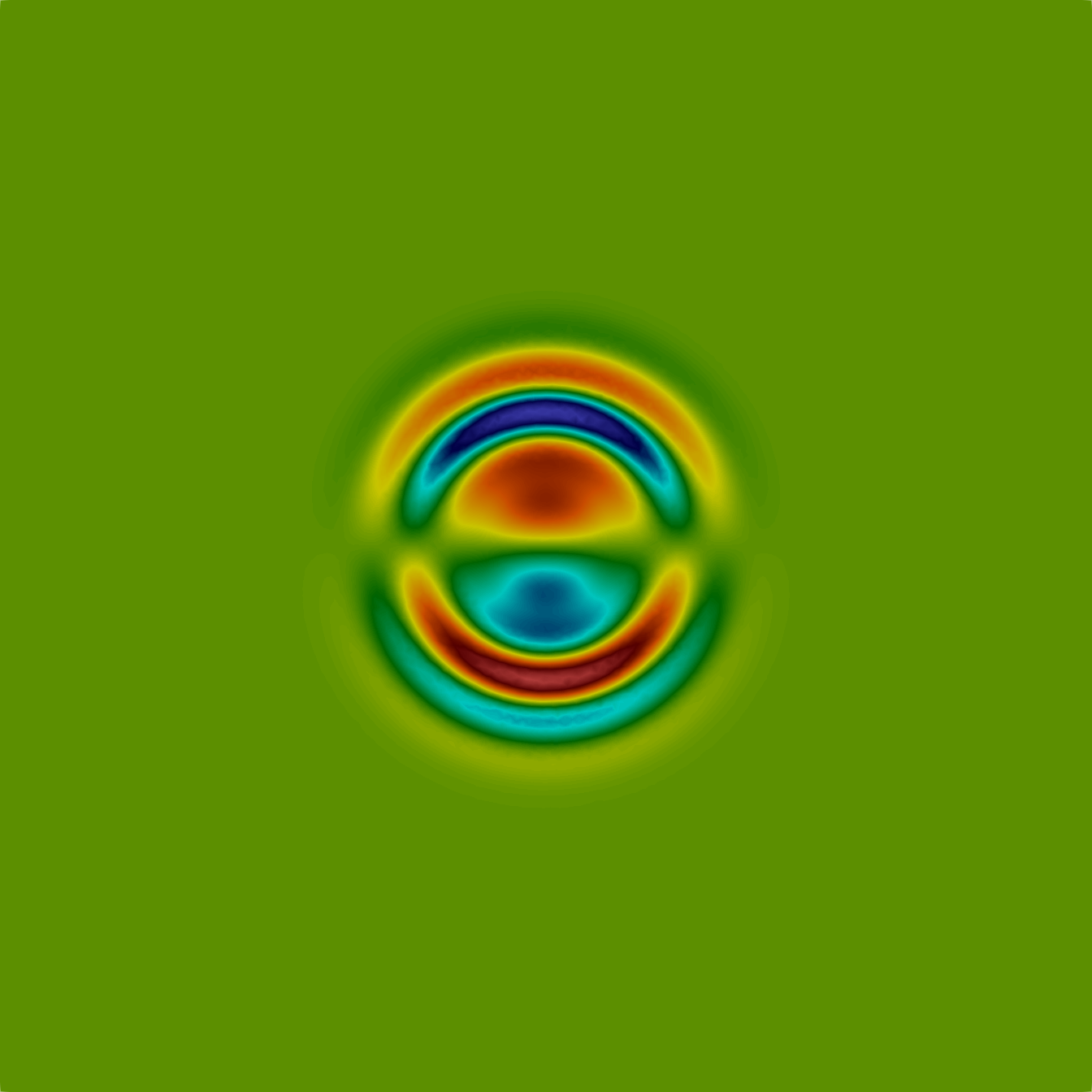}
    \label{fig:TestFisico_vy_08b}
\end{subfigure}
\begin{subfigure}[b]{.33\textwidth}
    \centering
    \includegraphics[width=1\textwidth]{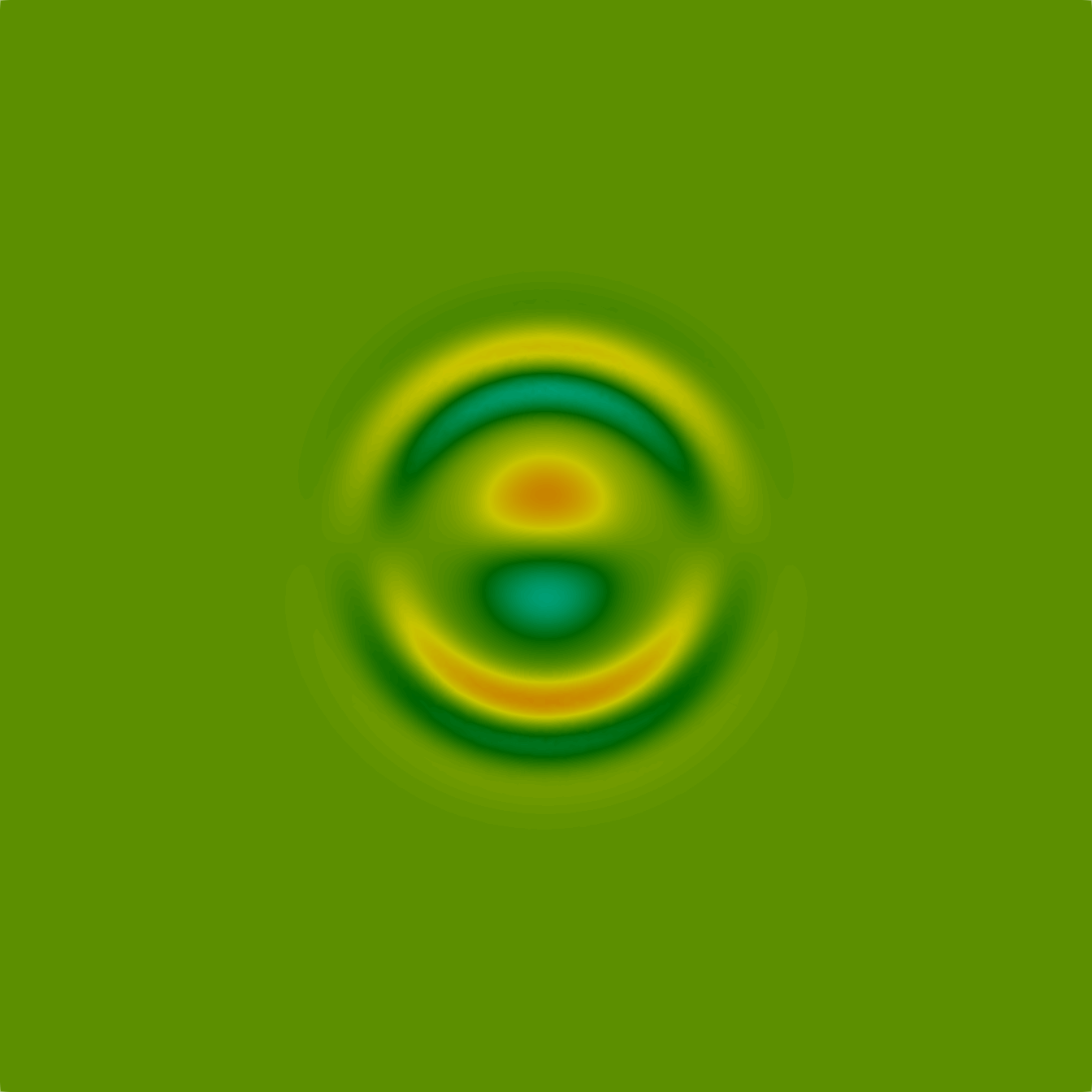}
    \label{fig:TestFisico_vy_09b}
\end{subfigure}
\begin{subfigure}[b]{.33\textwidth}
    \centering
    \includegraphics[width=1\textwidth]{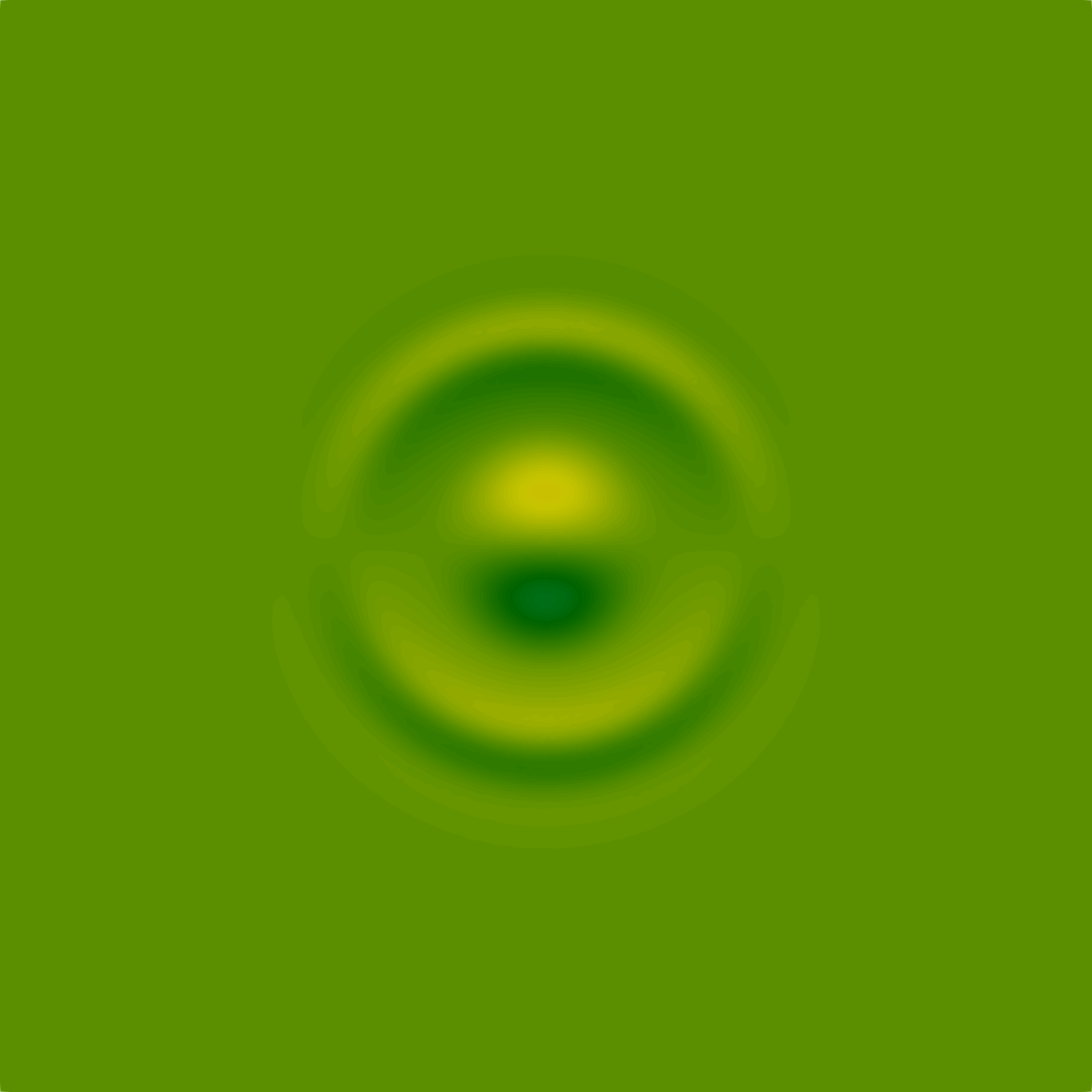}
    \label{fig:TestFisico_vy_1b}
\end{subfigure}

\vspace{-1.1cm}
\hspace{0.1cm}
\includegraphics[width=0.3\textwidth]{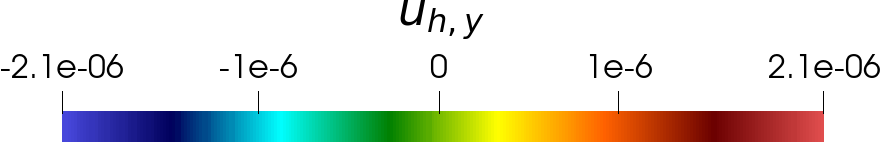}

\caption{Wave propagation in poroelastic medium: computed vertical component of the velocity $u_{h,y}$ at the time instants $t=0.8 \si{\second}$ (left), $t=0.9 \si{\second}$ (center), $t=1 \si{\second}$ (right).}
\label{fig:TestFisico_vy}
\end{figure}

\begin{figure}[ht]
\begin{subfigure}[b]{.33\textwidth}
    \centering
    \includegraphics[width=1\textwidth]{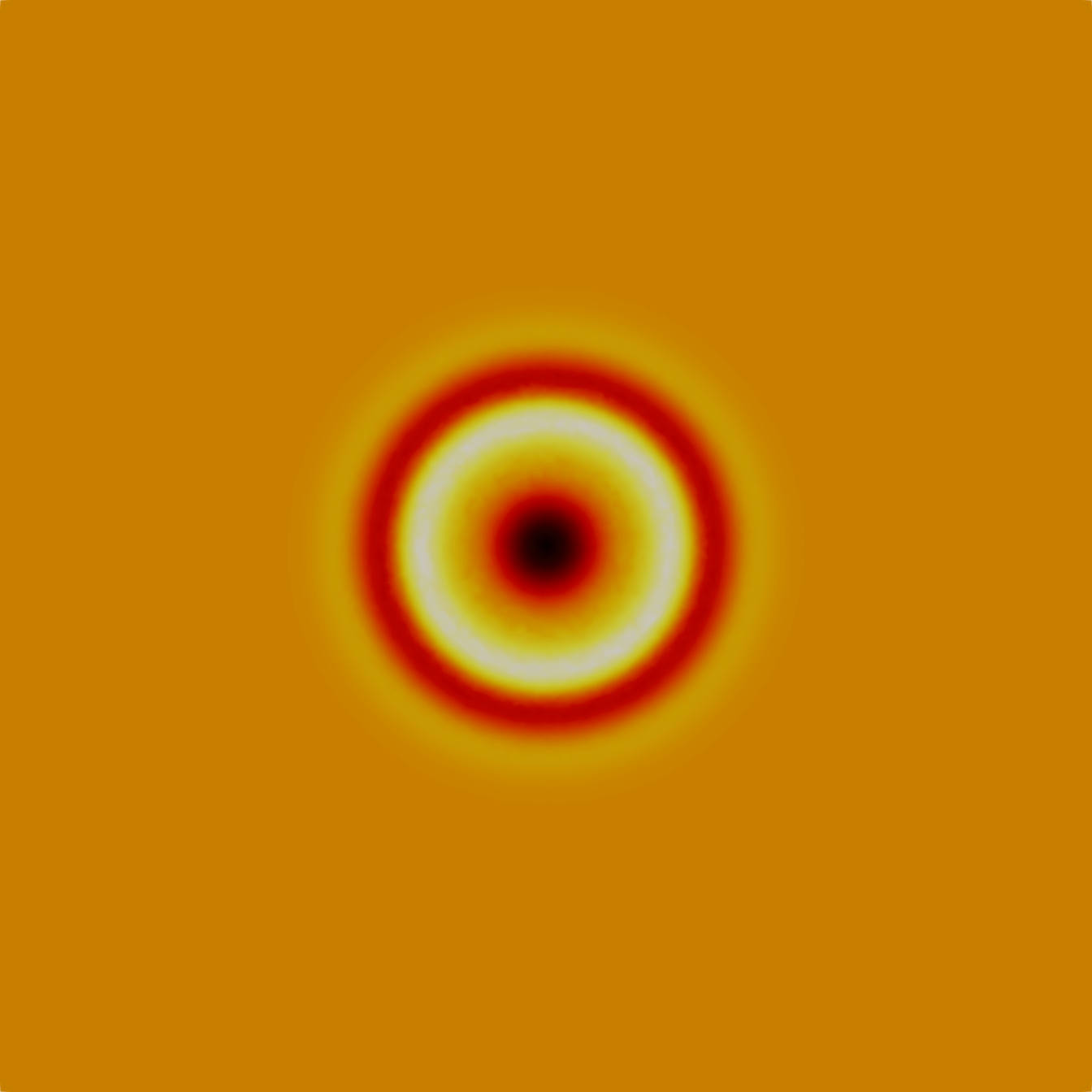}
    \label{fig:TestFisico_p_08}
\end{subfigure}
\begin{subfigure}[b]{.33\textwidth}
    \centering
    \includegraphics[width=1\textwidth]{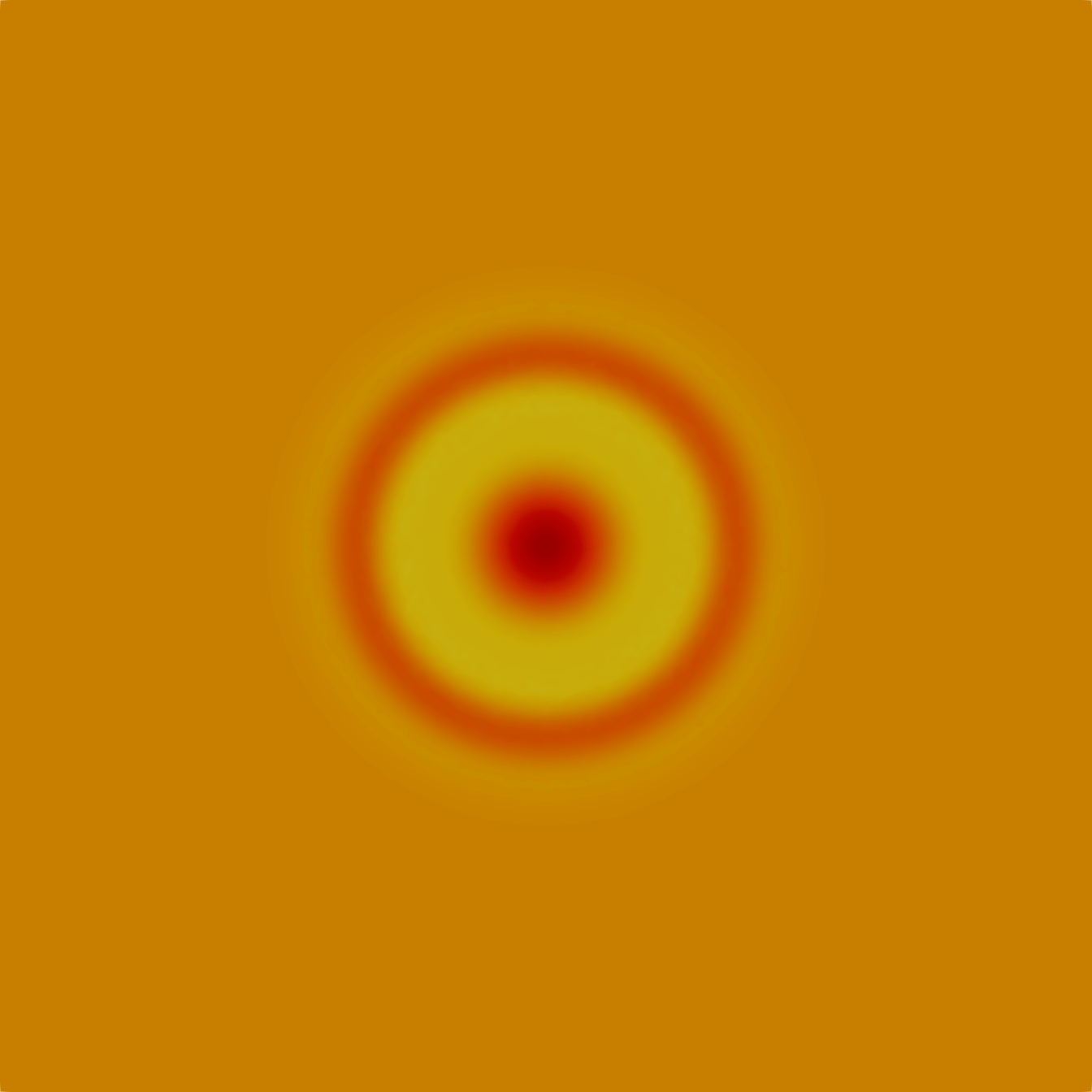}
    \label{fig:TestFisico_p_09}
\end{subfigure}
\begin{subfigure}[b]{.33\textwidth}
    \centering
    \includegraphics[width=1\textwidth]{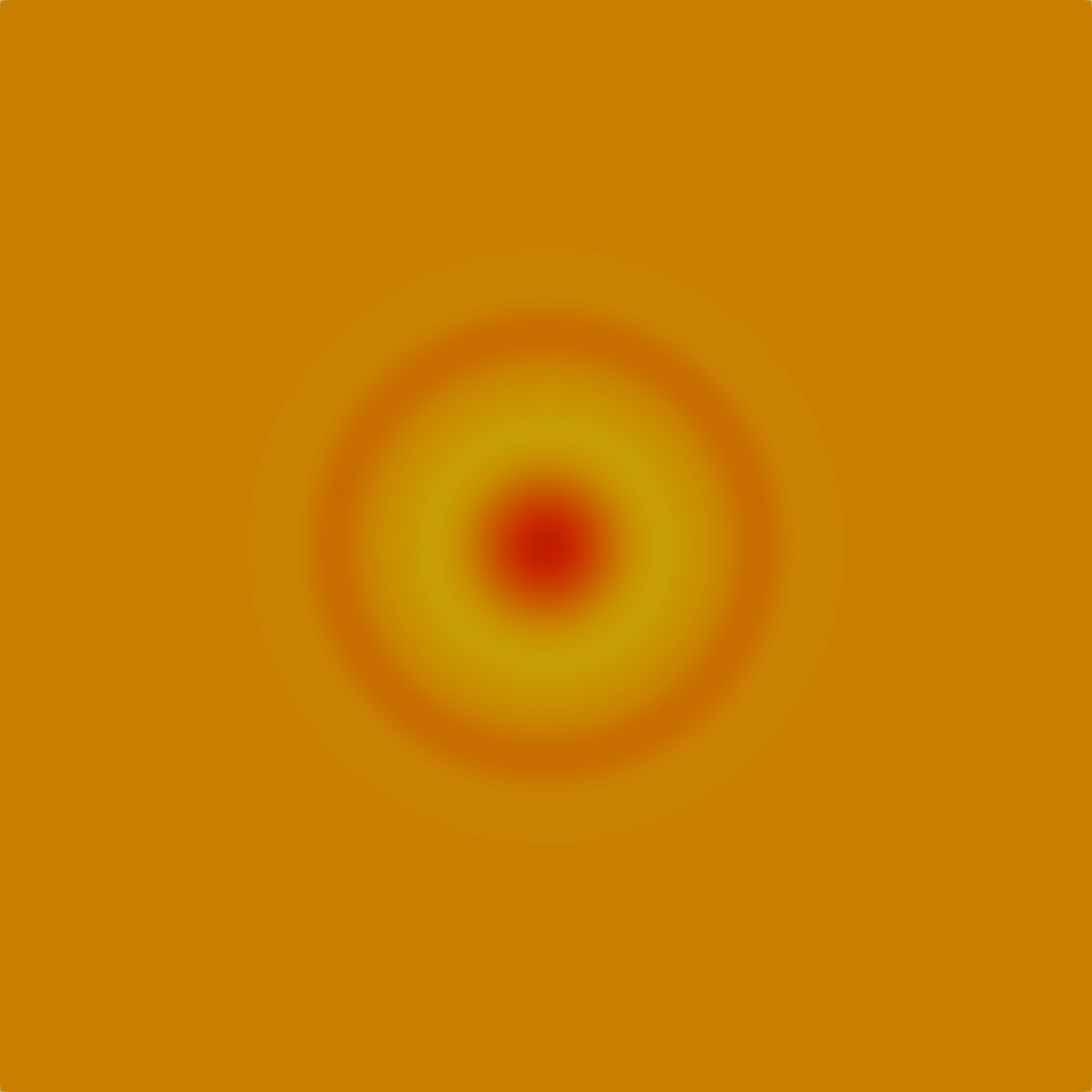}
    \label{fig:TestFisico_p_1}
\end{subfigure}

\vspace{-1.1cm}
\hspace{0.1cm}
\includegraphics[width=0.3\textwidth]{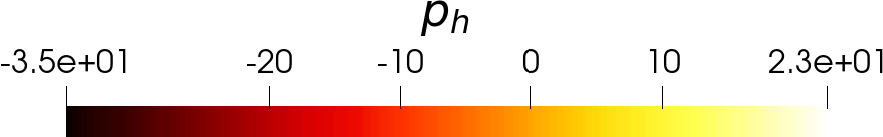}

\caption{Wave propagation in poroelastic medium: computed pressure field $p_{h}$ at the time instants $t=0.8 \si{\second}$ (left), $t=0.9 \si{\second}$ (center), $t=1 \si{\second}$ (right).}
\label{fig:TestFisico_p}
\end{figure}

From the results of Figure~\ref{fig:TestFisico_v} we notice a 
symmetric wavefront that detaches from the center of the domain; this is due to the homogeneity of the poroelastic material in which it propagates. We can see that the axes of symmetry of our wavefront are $x$- and $y$-axis of the square domain. This behavior is correct and is due to the form of the forcing term we are imposing. From the results of Figure~\ref{fig:TestFisico_vy}, we can observe the fast $P$-wave that propagates first in the domain, and we can observe also the presence of the slow $P$-wave and of the $E$-wave. Due to the choice of the forcing terms, the shear waves are not observed in this simulation. Last, by looking at Figure~\ref{fig:TestFisico_p}, we can observe that -- as expected -- the computed pressure field follows the path of the velocity field.
In conclusion, we can see a good agreement between our results and the ones presented in \cite{Antonietti2022, Bonetti2023, Meddahi:25}.

\medskip\medskip\noindent
\textbf{Acknowledgements and competing Interests}
This research has been partially funded by the European Union (ERC, NEMESIS, project number 101115663). Views and opinions expressed are however those of the author(s) only and do not necessarily reflect those of the European Union or the European Research Council Executive Agency. The research is part of the activities of \textit{Dipartimento di Eccellenza 2023-2027}, funded by MUR, Italy.
SB and MB are members of INdAM-GNCS and have been partially supported by the INdAM-GNCS project CUP E53C24001950001.
PV's work was done with the support of the Chilean grant ANID FONDECYT No. 11251691.

\bibliographystyle{plain}
\bibliography{arxiv}
\end{document}